 \date{}
 \newtheorem{theorem}{Theorem}[section]
 \newtheorem{definition}[theorem]{Definition}
\newenvironment{proof}{\paragraph{Proof:}}{\hfill$\square$}
\title{Harmful information spreading and its impact on vaccination campaigns modeled through fractal-fractional operators}
\author[aff1,aff2,aff3,aff4]{Ali Akg\"{u}l}
\author[aff5,aff6,aff7]{Auwalu Hamisu Usman}
\author[aff8]{J. Alberto Conejero\thanks{Corresponding author: aconejero@upv.es}}
\address[aff1]{Department of Electronics and Communication Engineering, Saveetha School of Engineering, SIMATS, Chennai, Tamilnadu, India.}
\address[aff2]{Siirt University, Art and Science Faculty, Department of Mathematics, 56100 Siirt, Turkey}
\address[aff3]{Department of Computer Engineering, Biruni University, 34010 Topkapı, Istanbul, Turkey} 
\address[aff4]{Near East University, Mathematics Research Center, Department of Mathematics, Near East Boulevard, PC: 99138, Nicosia /Mersin 10 – Turkey} 
\address[aff5]{Dept. Mathematics, Faculty of  Science, King Mongkut's University of Technology Thonburi, Bangkok, Thailand.}
\address[aff6]{KMUTTFixed Point Research Laboratory, Faculty of Science, King Mongkut’s University of Technology Thonburi (KMUTT), Bangkok 10140, Thailand.}
\address[aff7]{Dept. Math. Sci., Faculty of Physical Sciences, Bayero University, Kano. Nigeria.} 
\address[aff8]{Instituto de Matemática Pura y Aplicada. Universitat Politècnica de València, Spain.}
\begin{document}
 \maketitle
\begin{abstract}
Despite the huge efforts to develop and administer vaccines worldwide to cope with the COVID-19 pandemic, misinformation spreading through fake news in media and social networks about vaccination safety,
make that people refuse to be vaccinated, which harms not only these people but also the whole population.

In this work, we model the effects of harmful information spreading 
in immunization acquisition through vaccination. Our model is posed for several fractional derivative operators. We have conducted a comprehensive foundation analysis of this model for the different fractional derivatives. Additionally, we have incorporated a strength parameter that shows the combined impact of nonlinear and linear components within an epidemiological model. We have used the second derivative of the Lyapunov function to ascertain the detection of wave patterns within the vaccination dynamics. 
\end{abstract}

\textbf{Keywords:} Harmful information; Caputo fractional derivative;  Caputo-Fabrizio fractional derivative; Atangana-Baleanu fractional derivative; fractal fractional derivative.

\section{Introduction} 	

In a interconnected world, one of the most significant challenges the rapid spreading of harmful information. Such information can modify perceptions, promote divisions or fear, and diminish the trust in institutions. Besides, they are used as a tool to unbalanced the results of political elections \cite{swire2017processing,hinds2020wouldn}.

Misinformation has a significant prevalence on health issues, such as vaccines, drugs or smoking,  noncommunicable diseases, pandemics, eating disorders, and medical treatments \cite{suarez2021prevalence}. 
When these narratives circulate unchecked, they are impacting on public health and safety policies. To fight against it, it is crucial to set a collective vigilance and to foster the critical thinking of all population, since we are immerse in a context where the truth and the falsehood often mixed and blurred.\medskip

It is imperative that we promote media literacy and support responsible communication, ensuring that accurate information prevails in the face of harmful narratives \cite{jones2021does,dame2022combating}.
One of the most popular cases that we have recently suffered was the spreading of harmful  information about the collateral effects of COVID19 vaccination \cite{alam2021fighting,broniatowski2021first,hansson2021covid}.
Such misinformation led to a reduction in the number of vaccinated people, despite it was confirmed the impact of vaccination campaigns to fight against the SARS-CoV2 virus spreading \cite{hernandez2022waning,begga2023predicting}.\medskip

Many mathematical models have been proposed to predict the spreading of the COVID19 disease around the world \cite{lozano2021open,ijcai2022p740,janko2023optimizing}. These forecasts have far-reaching implications for how quickly and forcefully governments respond to an epidemic. However, rather than producing precise quantitative estimates about the significant degree or timeframe of disease incidence, epidemiological models are best used to assess the relative impacts of multiple interventions in reducing the burden of diseases \cite{soltesz2020effect,hale2021global,perra2021non}.\medskip

Among the abundance of epidemiological models, many of them have been based on the use of fractional derivatives. Atangana and Araz proposed a mathematical model for COVID19 spread in Turkey and South Africa \cite{Atangana2020}. Boccaletti et al. investigated the modeling and forecasting of epidemic spreading, specifically focusing on COVID19 and beyond \cite{Boccaletti2020}. Atangana introduced a novel COVID19 model utilizing fractional differential operators with singular and non-singular kernels, presenting an analysis and numerical scheme based on Newton polynomial \cite{Atangana2021}. Omame et al. proposed a fractional-order model for COVID19 and tuberculosis co-infection using the Atangana–Baleanu derivative \cite{Omame2021}. Kolebaje et al. delved into the study of nonlinear growth and mathematical modeling of COVID-19 in several African countries utilizing the Atangana–Baleanu fractional derivative \cite{Kolebaje2022}. Some studies can be found in the following references \cite{Ahmed2021,Bertozzi,Amit,Rai}.\medskip

Due their widespread application, fractional differentiation and integration have become subjects of concern in a number of research papers. This topic has indeed attracted many researchers across all fields. The subject arose from a question posed by L'Hopital to Leibniz that initially raised the issue of exponential function differentiation \cite{leibniz1849letter}. Nowadays, many fractional operator have been proposed together with the applications 
 \cite{caputo1967linear,caputo2017notion,atangana2016new,chen2010anomalous,doungmo_goufo2016application}. The fractal derivative's application to fractal media has gotten a lot of attention. It can be used to model more complex physical problems using more complex differentiation mathematical operators \cite{tatom1995relationship,rocco1999fractional,
carpinteri2014fractals,butera2014physically,atangana2016new,kiryakova2017fractional}. 
\medskip

In this work we study several fractional models to simulate the effect of harmful information against vaccination campaigns. We first introduce, Caputo, Caputo Fabrizio, and Atangana-Baleanu.\medskip

\begin{definition}[Caputo Fractional Derivative]
Let $\alpha \in (0,1)$ and $f \in C^1([0,T])$. The Caputo fractional derivative of order $\alpha$ is defined as :
\[
{}^{C}D_t^{\alpha} f(t) = \frac{1}{\Gamma(1 - \alpha)} \int_0^t \frac{f'(s)}{(t - s)^{\alpha}} \, ds,
\]
where $\Gamma(\cdot)$ denotes the Gamma function.
\end{definition}

\begin{definition}[Caputo–Fabrizio Fractional Derivative]
Let $\alpha \in (0,1)$ and $f \in C^1([0,T])$. The Caputo–Fabrizio fractional derivative of order $\alpha$ is defined as \cite{caputo2015new}: 
\[
{}^{CF}D_t^{\alpha} f(t) = \frac{1 - \alpha}{1} f(t) + \frac{\alpha}{1 - \alpha} \int_0^t f'(s) \exp\left(-\frac{\alpha}{1 - \alpha}(t - s)\right) ds.
\]
\end{definition}

\begin{definition}[Atangana–Baleanu Fractional Derivative in Caputo Sense]
Let $\alpha \in (0,1)$ and $f \in C^1([0,T])$. The Atangana–Baleanu fractional derivative in the Caputo sense is defined as \cite{atangana2016new}:
\[
{}^{ABC}D_t^{\alpha} f(t) = \frac{B(\alpha)}{1 - \alpha} \int_0^t f'(s) E_{\alpha} \left(-\frac{\alpha}{1 - \alpha}(t - s)^{\alpha}\right) ds,
\]
where $E_{\alpha}(\cdot)$ is the Mittag-Leffler function and $B(\alpha)$ is a normalization function such that $B(0) = B(1) = 1$.
\end{definition}

We also consider the fractional differentiation operators with the next three kernels. These approach combines memory-dependence ($\alpha$) and with fractal geometry ($\eta$).

\begin{definition}[Fractal Fractional Derivatives]
Let $\alpha,\eta \in (0,1)$ and $f \in C^1([0,T])$. 
We can introduce three different fractional derivatives with power-law, exponential, and Mittag-Leffler kernels \cite{Ata17a}.

\begin{enumerate}
\item Powe-law type kernel.
	
	\begin{equation}
	^{FFP}_{c}\mathit{D}^{\alpha,\eta}_{t}f(t)=\frac{1}{1-\alpha}\frac{d}{du^{\eta}}\int_{c}^{t}f(s)(t-s)^{-\alpha}ds,~ 0<\alpha,\eta \le 1,
	\end{equation}
	where,
	\begin{equation}
	~~~~~\frac{df(s)}{ds^{\eta}}=\lim\limits_{t\to s}\frac{f(t)-f(s)}{t^{\eta}-s^{\eta}}~~~~~~~~~~~~~~~~~~~~~~~~~~~~~~~~~
	\end{equation}

\item Exponential-decay type kernel:
	\begin{equation}
	^{FFE}_{c}\mathit{D}^{\alpha,\eta}_{t}f(t)=\frac{\mathit{M_{1}}(\alpha)}{1-\alpha}\frac{d}{dt^{\eta}}\int_{c}^{t}f(s)\exp\Big(\frac{-\alpha}{1-\alpha}(t-s)\Big)ds,~0<\alpha,\eta \le 1.
	\end{equation}

\item Mittag-Leffler type kernel:
	
	\begin{equation}
	^{FFM}_{c}\mathit{D}^{\alpha,\eta}_{t}f(t)=\frac{\mathit{AB}(\alpha)}{1-\alpha}\frac{d}{dt^{\eta}}\int_{c}^{t}f(s)\mathit{E}_{\alpha}\Big(\frac{-\alpha}{1-\alpha}(t-s)^{\alpha}\Big)ds,~0<\alpha,\eta \le 1,
	\end{equation}
	where, $\mathit{AB}(\alpha)=1-\alpha+\frac{\alpha}{\Gamma(\alpha)}$.
\end{enumerate}

Let us also introduce the corresponding three integral operators:
\begin{enumerate}
\item Power-law type kernel:	
	\begin{equation}
	^{FFP}_{0}\mathit{I}^{\alpha,\eta}_{t}f(t)=\frac{\eta}{\Gamma(\alpha)}\int_{0}^{t}(t-s)^{\alpha-1}s^{\tau-1}\phi(s)ds.~~~~~~~~~~~~~~~~~~~~~~~~~~~~~~~~~
	\end{equation}
\item Exponential-decay type kernel
	\begin{equation}
	^{FFE}_{0}\mathit{I}^{\alpha,\eta}_{t}f(t)=\frac{{\alpha}\eta}{\mathit{M}_{1}(\alpha)}\int_{0}^{t}s^{\alpha-1}f(s)ds+\frac{\tau(1-\alpha)t^{\tau-1}}{M_{1}(\alpha)}\phi(t).~~~~~~~~~~~~~~~~~~
	\end{equation}
\item Mittag-Leffler type kernel
\begin{equation}
	^{FFM}_{0}\mathit{I}^{\alpha,\eta}_{t}f(t)=\frac{{\alpha}\eta}{\mathit{AB}(\alpha)}\int_{0}^{t}s^{\alpha-1}f(s)(t-s)^{\alpha-1}ds+\frac{\tau(1-\alpha)t^{\tau-1}}{\mathit{AB}(\alpha)}f(t).~~~~~~~~~~~~~~~
	\end{equation}
\end{enumerate}
\end{definition}

Beyond introducing a new model for the spreading of harmful information as a epidemiological model, we offer a new perspective to tackle the problem of harmful information spreading in our societies. We also provide a model through  fractal-fractional derivatives has never been analyzed so far. Additionally, we have discretized the fractal fractional model and conduct numerical simulations. We expect that this model open a new perspective to get deeper in the dynamics of SIR-type models.\medskip

The paper is organized as follows. In Section \ref{sec:formulation} we set the mathematical epidemiological model that describe the equations and variables in the model.
We first prove the existence and uniqueness of solutions in Section \ref{sec:ex_un}, 
In Section \ref{sec:positiveness_boundedness}, we show the positive boundedness for the classical model as long as for the different fractional models. We compute the equilibrium points in Section \ref{sec:equilibrium} and the reproduction and strength numbers in Section \ref{sec:rs_number}.
Finally, we illustrate the results of the solutions for fractal-fractional derivatives in Section \ref{sec:numerical} and outline some conclusions in Section \ref{sec:conclusions}.


\section{Mathematical formulation of harmful information impact on a vaccination campaign}
\label{sec:formulation}

We construct a mathematical model to portray the impact of harmful criticism on vaccines and how this decreases the effect of vaccination efforts to control the spreading.\medskip

Let us define the key variables:
$\Pi$ represents the influx or rate of individuals becoming eligible for a vaccine dose, while $N$ represents the total number of eligible individuals. 
$S_p(t)$ denotes the group of individuals willing to accept the vaccine administration but susceptible to being influenced by harmful criticisms and challenges. $I(t)$ represents the group affected by detrimental news media and criticisms. Within this group, $I_p(t)$ signifies individuals affected but still holding positive opinions regarding vaccination, while $I_n(t)$ refers to those affected who have developed negative views about it. Meanwhile, $I_c(t)$ indicates individuals affected and experiencing confusion, lacking clear positive or negative opinions about vaccination.
$R(t)$ encompasses individuals who have overcome divisions caused by criticisms and challenges.
Lastly, $D(t)$ accounts for individuals who succumb to the divisive effects, leading to either death or denial of vaccination.\medskip

Through this model, we aim to capture the dynamics and consequences of harmful criticisms on people's perceptions and decisions regarding a vaccination strategy, shedding light on how different groups respond and evolve. The model is given by the next equations
\begin{align}
\label{eqn1}
\frac{d S_p}{d t} & =\Pi-\frac{{\beta}S_pI}{N}-{\sigma}S_p-{\nu}S_p\\
\label{eqn2}
\frac{d I}{d t} &=\frac{{\beta}S_pI}{N}-\gamma_1I-\gamma_2I-\gamma_3I-\gamma_4I-\tau_1I-{\nu}I\\
\label{eqn3}
\frac{d I_p}{d t}& =\gamma_1I-\tau_3I_p-{\nu}I_p\\
\label{eqn4}
\frac{d I_n}{d t} & =\gamma_2I-\tau_2I_n-\phi_1I_n-{\nu}I_n\\
\label{eqn5}
\frac{d I_c}{d t} & =\gamma_3I-\tau_4{I_c}-\phi_2I_c-{\nu}I_c\\
\label{eqn6}
\frac{d R}{d t}& =\tau_1I+\tau_2I_n+\tau_3I_p+\tau_4I_c-\tau{R}-{\nu}R\\
\label{eqn7}
\frac{d D}{d t} &={\sigma}S_p+\phi_1I_n+\phi_2I_c+\gamma_4{I}+\tau{R}-{\nu}D.
\end{align}
with $(S_p^0,I^0,I_p^0,I_n^0,I_c^0,R^0,D^0)$ as initial conditions at $t=0$. The following diagram explains the flows modeled by the equations.

\begin{figure}[H]
\begin{center}
\includegraphics[scale=.4]{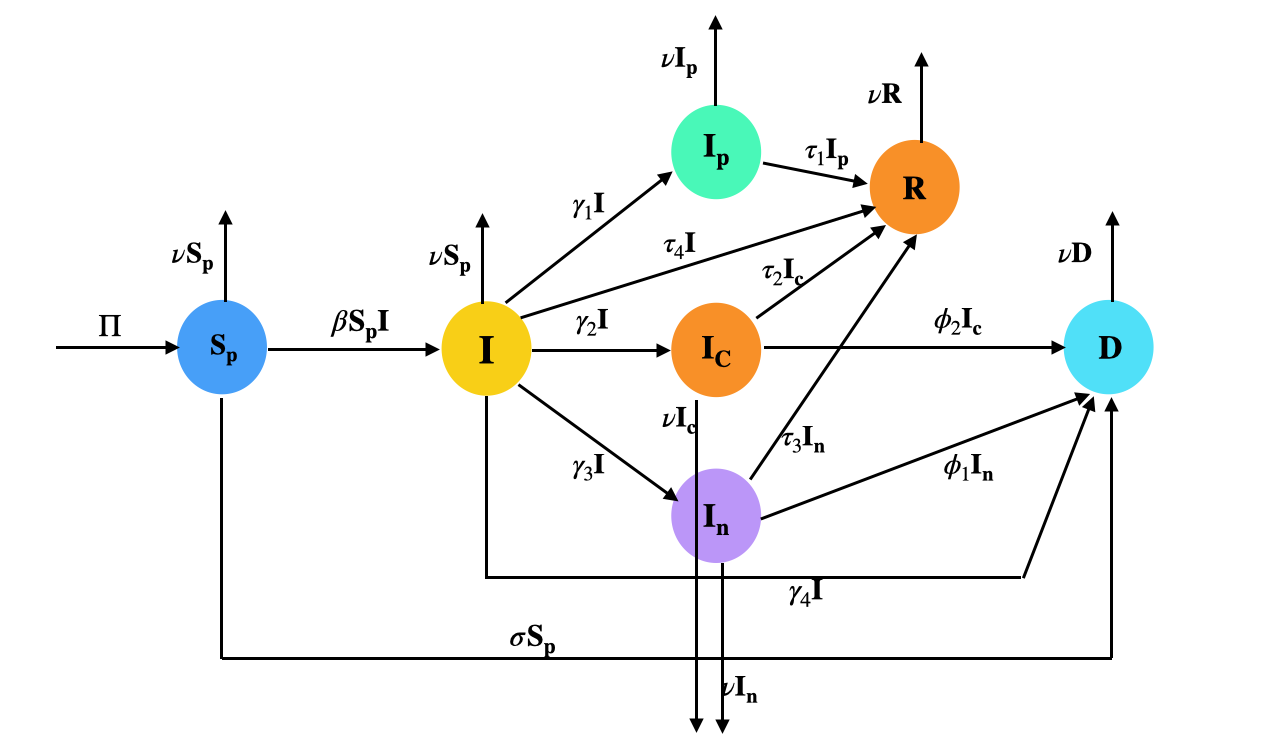}
\end{center}
\caption{Harmful information spreading model.}
\end{figure}

\section{Existence and Uniqueness}\label{sec:ex_un}
In this section, our aim is to conduct an extensive analysis establishing the existence and uniqueness of the solutions to the system of equations \eqref{eqn1}-\eqref{eqn7}. To achieve this objective, and since the right side of these equations are continuous, we only need to state the following theorem which proofs the property of being locally Lipschitz. The statement for the fractional derivatives is similar.

\begin{theorem}
Let us consider the model 
\begin{equation}
\left(\frac{d S_p}{d t},\frac{d I}{d t},\frac{d I_p}{d t},\frac{d I_n}{d t},\frac{d I_c}{d t},\frac{d R}{d t},\frac{d D}{d t}\right)=(G_i(S_p,I,I_p,I_n,I_c,R,D))_{1\le i\le 7}
\end{equation}
where the functions $G_i$ represent the left side of equations \eqref{eqn1}-\eqref{eqn7}.
There are positive constants $\rho_i$ and $\bar{\rho_i}$ such that for all $(x_i,t)\in\mathbb{R}^7\times[0,T]$ and for all $1\le i\le 7$, we have
\begin{align}
	& \;\big|G_i(x_i^1,t)-G_i(x_i^2,t)\big|^2\leqslant\rho_i\big|x_i^1-x_i^2\big|^2.\label{eqa}\\
	& \;\big|G_i(x_i,t)\big|^2\leqslant\bar{\rho_i}\big(1+|x_i|^2\big)^2 \label{eqb}.
	\end{align}
\end{theorem}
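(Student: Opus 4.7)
The plan is to prove the two inequalities separately, exploiting the structure of the right-hand sides: each $G_i$ is a polynomial in the state variables whose only nonlinearity is the bilinear incidence term $\beta S_p I/N$ appearing in $G_1$ and $G_2$; every other contribution is a linear combination of the state variables plus the constant $\Pi$. I would first fix the bounded biologically feasible region $\Omega \subset \mathbb{R}^7_+$ obtained from positive boundedness (established in Section \ref{sec:positiveness_boundedness}): since $N$ is the total eligible population and $\Pi,\nu>0$, each component is a priori bounded by some constant $M$ depending only on $\Pi/\nu$. On such a bounded invariant region the function is locally Lipschitz, which is exactly what is needed to invoke a Picard--Lindel\"{o}f type result.

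For inequality \eqref{eqa}, I would proceed componentwise. For the five equations whose right-hand side is linear in $(S_p,I,I_p,I_n,I_c,R,D)$, the Lipschitz estimate is immediate: grouping coefficients and applying the elementary inequality $(a_1+\cdots+a_k)^2\leq k(a_1^2+\cdots+a_k^2)$ yields a constant $\rho_i$ depending on the $\gamma_j,\tau_j,\phi_j,\sigma,\nu,\tau$. The only delicate step is handling $G_1$ and $G_2$, where the product $S_pI$ appears. I would use the standard add--subtract trick
\[
\bigl|S_p^{1}I^{1}-S_p^{2}I^{2}\bigr|\leq \bigl|S_p^{1}\bigr|\,\bigl|I^{1}-I^{2}\bigr|+\bigl|I^{2}\bigr|\,\bigl|S_p^{1}-S_p^{2}\bigr|,
\]
and then use the uniform bound $|S_p^{1}|,|I^{2}|\leq M$ on $\Omega$ together with $(a+b)^2\leq 2a^2+2b^2$ to absorb the bilinear contribution into a constant $\rho_1,\rho_2$ proportional to $(\beta M/N)^2$.

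For the growth estimate \eqref{eqb}, the linear $G_i$'s satisfy $|G_i(x,t)|\leq c_i(1+|x|)$ trivially, so $|G_i|^2\leq c_i^2(1+|x|^2)^2$ after squaring and using $(1+|x|)^2\leq 2(1+|x|^2)\leq 2(1+|x|^2)^2$. For $G_1$ and $G_2$ I would bound the bilinear term using $|S_pI|\leq \tfrac12(S_p^2+I^2)\leq \tfrac12|x|^2$, which gives $|G_i(x,t)|\leq \bar c_i(1+|x|^2)$ and hence $|G_i(x,t)|^2\leq\bar c_i^{\,2}(1+|x|^2)^2$, yielding the claim with $\bar\rho_i=\bar c_i^{\,2}$. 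Collecting all constants produces the finite positive $\rho_i,\bar\rho_i$ uniform on $[0,T]$ (the right-hand sides are autonomous, so there is no $t$-dependence to control).

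The only real obstacle is justifying that one may restrict to a bounded region when stating a \emph{local} Lipschitz property, since the quadratic term $S_pI/N$ is certainly not globally Lipschitz on $\mathbb{R}^7$. I would address this by noting that the biological relevance of the model restricts solutions to $\Omega$ thanks to the positively invariant set argument carried out in the next section, so local Lipschitz continuity on $\Omega$ is sufficient for existence and uniqueness of a global solution. Extension of these estimates to the Caputo, Caputo--Fabrizio, Atangana--Baleanu, and fractal-fractional versions is then immediate because in each case the right-hand side is exactly the same vector field; only the differential operator on the left changes, and the standard fixed-point arguments for these operators require precisely the Lipschitz and linear-growth bounds \eqref{eqa}--\eqref{eqb}.
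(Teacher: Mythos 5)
Your proposal is correct, but it takes a genuinely different route from the paper's own argument, and the difference is worth spelling out. The paper never uses the add--subtract trick on the bilinear term: for each $G_i$ it varies only the ``own'' variable of that equation (only $S_p$ in $G_1$, only $I$ in $G_2$, etc.), freezing every other state variable and absorbing it into a supremum norm such as $\|I\|_\infty$ or $\|S_p\|_\infty$. This yields a family of one-variable Lipschitz constants $\rho_i$ rather than a joint Lipschitz estimate for the full vector field, and it implicitly presupposes the very boundedness of the trajectories that you make explicit via the invariant region $\Omega$. Your treatment of $\big|S_p^1I^1-S_p^2I^2\big|$ gives the stronger joint Lipschitz property that a Picard--Lindel\"{o}f or fixed-point argument actually requires, so your version closes a logical gap that the paper leaves open. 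The contrast is even sharper for the growth bound \eqref{eqb}: the paper factors out a leading constant and then needs auxiliary parameter restrictions $K_1,\dots,K_6<1$ to force the remaining coefficient below one, and its final existence--uniqueness claim is conditional on $\max\{K_1,\ldots,K_6\}\le 1$; your direct estimate $|G_i(x,t)|\le c_i(1+|x|)$ for the linear components and $|S_pI|\le\tfrac12|x|^2$ for the bilinear one produces $\bar\rho_i$ unconditionally, with no constraint on the model parameters. (As a side benefit, your derivation actually delivers the exponent stated in \eqref{eqb}, namely $\bar\rho_i\big(1+|x_i|^2\big)^2$, whereas the paper's computations only establish the bound with a single power of $1+|x_i|^2$.) In short: the paper's proof is a quicker componentwise computation at the cost of sup-norm assumptions and extra hypotheses on the parameters; yours is slightly longer but self-contained, parameter-free, and correctly localized to the positively invariant region.
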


\begin{proof}
We start computing the estimations indicated in \eqref{eqa}.
\begin{align}
\big|G_1(S_p^1,t)-G_1(S_p^2,t)\big|^2&\nonumber =\bigg|\Pi-\left(\frac{{\beta}I}{N}+{\sigma}+{\nu}\right)S_p^1-\left(\Pi-\left(\frac{{\beta}I}{N}+{\sigma}+{\nu}\right)S_p^2\right)\bigg|^2\\ &\nonumber \leqslant \big|\left({{\beta}I}+{\sigma}+{\nu}\right)(S_p^1-S_p^2)\big|^2 \\ & \leqslant \rho_1\big|S_p^1-S_p^2\big|^2,\,\,\,\,\text{with}\,\,\,\, \rho_1= 2\left({{\beta^2}\|I\|_\infty^2}+{\sigma^4}+{\nu^2}\right).
\end{align}

In a similar way, we get estimations for the other of equations.
\begin{align}
\big|G_2(I^1,t)-G_2(I^2,t)\big|^2 & \le \rho_2|I^1-I^2|^2,
\end{align}
with $\rho_2=2\left({{\beta^2}\|S_p\|_\infty^2}+\sum_{i=1}^4\gamma_i^2+\tau_1^2+{\nu}^2\right)$.

\begin{align}
\big|G_3(I^1_p,t)-G_3(I^2_p,t)\big|^2 & \le \rho_3|I^1_p-I^2_p|^2,\,\,\,\, \text{with}\,\,\,\, \rho_3=2(\tau_3^2+{\nu^2})\\
\big|G_4(I^1_n,t)-G_4(I^2_n,t)\big|^2 & \le \rho_4|I^1_n-I^2_n|^2,\,\,\,\, \text{with}\,\,\,\, \rho_4=2(\tau_2^2+\phi_1^2+{\nu^2})\\
\big|G_5(I^1_c,t)-G_5(I^2_c,t)\big|^2 & \le \rho_5|I^1_c-I^2_c|^2,\,\,\,\, \text{with}\,\,\,\, \rho_5=2(\tau_4^2+\phi_2^2+{\nu^2})\\
\big|G_6(R^1,t)-G_6(R^2,t)\big|^2 & \le \rho_6|R^1-R^2|^2,\,\,\,\, \text{with}\,\,\,\, \rho_6=2(\tau^2+{\nu^2})\\
\big|G_7(D^1,t)-G_7(D^2,t)\big|^2 & \le \rho_7|D^1-D^2|^2,\,\,\,\, \text{with}\,\,\,\, \rho_7=2({\nu^2}+\epsilon_1)
\end{align}

To prove \eqref{eqb}, we start with the first equation

\begin{align}
\big|G_1(S_p,t)\big|^2 &\nonumber = \bigg|\Pi-\left(\frac{{\beta}I}{N}+{\sigma}+{\nu}\right)S_p\bigg|^2  \leqslant 2|\Pi|^2+2\left({{\beta^2}\|I\|_\infty^2}+{\sigma^2}^2+{\nu^2}\right)\big|S_p\big|^2 \\ &\nonumber \leqslant 2|\Pi|^2\left(1+\frac{2\left({{\beta^2}\|I\|_\infty^2}+{\sigma^2}^2+{\nu^2}\right)}{2|\Pi|^2}\big|S_p\big|^2\right) \leqslant \bar{\rho_1}\left(1+\big|S_p\big|^2\right)
\end{align}
with the condition $\bar{\rho_1}=1$ and $K_1=\frac{2\left({{\beta^2}\|I\|_\infty^2}+{\sigma^2}^2+{\nu^2}\right)}{2|\Pi|^2}<{1}$.

So that, we can proceed with the other equations as follows:

\begin{align}
\big|G_2(I,t)\big|^2 &\nonumber  =\bigg|\left(\frac{{\beta}S_p}{N}-(\gamma_1+\gamma_2+\gamma_3+\gamma_4+\tau_1+{\nu})I\right)\bigg|^2\leqslant \bar{\rho_2}\big(1+|I|^2\big),
\end{align}
with $\bar{\rho_2}=2\left({{\beta^2}\|S_p\|^2_\infty}+\gamma_1^2+\gamma_2^2+\gamma_3^2+\gamma_4^2+\tau_1^2+{\nu^2}\right)$.
\begin{align}
\big|G_3(I_p,t)\big|^2\big|^2&\nonumber =\left|\gamma_1I-(\tau_3+{\nu})I_p\right|^2 \leqslant 2(\gamma_1^2\|I\|_\infty^2)\left(1+\frac{2(\tau_3^2+{\nu^2})}{2(\gamma_1^2\|I\|_\infty^2)}\left|I_p\right|^2\right)\leqslant \bar{\rho_3}\left(1+|I_p^2|\right)
\end{align}
with 
$\bar{\rho_3}=2(\gamma_1^2\|I\|_\infty^2)$ and the condition $K_2=\frac{2(\tau_3^2+{\nu^2})}{2(\gamma_1^2\|I\|_\infty^2)}<1$

\begin{align}
|G_4(I_n,t)|^2&\nonumber =\big|\gamma_2I-(\tau_2+\phi_1+{\nu})I_n\big|^2 \leqslant 2\left(\gamma_2^2\|I\|_\infty^2\|I_c\|_\infty^2\right)\left(1+\frac{2(\tau_2+\phi_1+{\nu})}{2\left(\gamma_2^2\|I\|_\infty^2\|I_c\|_\infty^2\right)}\big|I_n\big|^2\right),\\ & \leqslant \bar{\rho_4}\left(1+\big|I_n\big|^2\right),
\end{align}
with $\bar{\rho_4}=2\left(\gamma_2^2\|I\|_\infty^2\|I_c\|_\infty^2\right)$ and the condition $K_3=\frac{2(\tau_2+\phi_1+{\nu})}{2\left(\gamma_2^2\|I\|_\infty^2\|I_c\|_\infty^2\right)}<1$
\begin{align}
\big|G_5(I_c,t)\big|^2&\nonumber \leqslant 2\gamma_3^2\|I\|^2_\infty\left(1+\frac{2(\tau_4^2+\phi_2^2+{\nu})}{2\gamma_3^2\|I\|^2_\infty}\big|I_c\big|^2\right),\\ & \leqslant \bar{\rho_5}\left(1+\big|I_c\big|^2\right),
\end{align}
with $\bar{\rho_5}=2\gamma_3^2\|I\|^2_\infty\,\,\,$ and $K_4=\frac{2(\tau_4^2+\phi_2^2+{\nu})}{2\gamma_3^2\|I\|^2_\infty}<1$
\begin{align}
\big|G_6(R,t)\big|^2 &\le 2\left(\tau^2_1\|I\|_\infty^2+\tau_2^2\|I_n\|_\infty^2+\tau_3^2\|I_p\|_\infty^2+\tau_4^2\|I_c\|_\infty^2\right)+2\left(\tau^2+{\nu^2}\right)|R|^2,\\ &\nonumber \leqslant \bar{\rho_6}\left(1+\frac{2\left(\tau^2+{\nu^2}\right)}{2\left(\tau^2_1\|I\|_\infty^2+\tau_2^2\|I_n\|_\infty^2+\tau_3^2\|I_p\|_\infty^2+\tau_4^2\|I_c\|_\infty^2\right)}|R|^2\right) \leqslant \bar{\rho_6}\left(1+|R|^2\right),
\end{align}
with $\bar{\rho_6}=2\left(\tau^2_1\|I\|_\infty^2+\tau_2^2\|I_n\|_\infty^2+\tau_3^2\|I_p\|_\infty^2+\tau_4^2\|I_c\|_\infty^2\right)$\\ and $K_5=\frac{2\left(\tau^2+{\nu^2}\right)}{2\left(\tau^2_1\|I\|_\infty^2+\tau_2^2\|I_n\|_\infty^2+\tau_3^2\|I_p\|_\infty^2+\tau_4^2\|I_c\|_\infty^2\right)}<1$
\begin{align}
\big|G_7(D,t)\big|^2 & \leqslant 2\left({\sigma^2}\|S_p\|_\infty^2+\phi_1^2\|I_n\|_\infty^2+\phi^2_2\|I_c\|_\infty^2+\gamma_4^2\|I\|_\infty^2+\tau^2\|R\|_\infty^2+{\nu^2}\big|D\big|^2\right)\\ &\nonumber \leqslant \bar{\rho_7}\left(1+\frac{2{\nu^2}}{2\left({\sigma^2}\|S_p\|_\infty^2+\phi_1^2\|I_n\|_\infty^2+\phi^2_2\|I_c\|_\infty^2+\gamma_4^2\|I\|_\infty^2+\tau^2\|R\|_\infty^2\right)}\big|D\big|^2\right)\leqslant \bar{\rho_7}\left(1+\big|D\big|^2\right),
\end{align}
$\bar{\rho_7}=2\left({\sigma^2}\|S_p\|_\infty^2+\phi_1^2\|I_n\|_\infty^2+\phi^2_2\|I_c\|_\infty^2+\gamma_4^2\|I\|_\infty^2+\tau^2\|R\|_\infty^2\right)\,\,\,$\\ with the condition that $K_6=\frac{2{\nu^2}}{2\left({\sigma^2}\|S_p\|_\infty^2+\phi_1^2\|I_n\|_\infty^2+\phi^2_2\|I_c\|_\infty^2+\gamma_4^2\|I\|_\infty^2+\tau^2\|R\|_\infty^2\right)}<1$.\medskip

Then, our system's solution exists and is unique provided that $\max\{K_1,\ldots K_6\}\le 1$.\\
\end{proof}

\section{Positiveness and boundedness of the solution}\label{sec:positiveness_boundedness}
We start with the study of the positivity and lower boundedness of the solutions for the abstract Cauchy problem posed in Section \ref{sec:formulation}.
Let us define the supremum norm of a function $f(t)$, with domain $D_f$,  by $
 \|f\| _\infty = \sup_{t \in D_f} |f(t)|$. Starting from
\begin{equation*}
\frac{d S_p}{d t} =\Pi-\frac{{\beta}S_pI}{N}-{\sigma}S_p-{\nu}S_p,
\geqslant-\left(\frac{{\beta}I}{N}+{\sigma}+{\nu}\right)S_p \geqslant-\left(\frac{{\beta}\|I\|_\infty}{\|N\|_\infty}+{\sigma}+{\nu}\right)S_p,
\end{equation*}
we can obtain a lower bound $
S_p(t)\geqslant S_p^0e^{-\left(\frac{{\beta}\|I\|_\infty}{\|N\|_\infty}+{\sigma}+{\nu}\right)t}$ for all $t\ge 0$. Applying the same approach for all the remaining functions, we obtain the following lower bounds for all $t\ge 0$:

\begin{center}
\begin{tabular}{lll}
$I(t)\geqslant I^0e^{-\left(\gamma_1+\gamma_2+\gamma_3+\gamma_4+\tau_1+{\nu}\right)t}$,
& $I_p(t)\geqslant I_p^0e^{-\left(\tau_3+{\nu}\right)t}$ 
& $I_n(t)\geqslant I_n^0e^{-\left(\tau_2+\phi_1+{\nu}\right)t}$\\ 
$I_c(t)\geqslant I_c^0e^{-\left(\tau_4+\phi_2+{\nu}\right)t}$
& $R(t)\geqslant R^0e^{-\left(\tau{R}+{\nu}\right)t}$ 
& $D(t)\geqslant D^0e^{-\left({\nu}\right)t}$ \\ 
\end{tabular} 
\end{center}

Now, let us study the positivity of the solutions for the fractional versions (Caputo, Caputo-Fabrizio, and Atangana-Baleanu) of the abstract Cauchy problem posed in \eqref{eqn1}-\eqref{eqn7}.

\subsection{The Caputo fractional derivative case}
We start with the Caputo fractional derivative. Since
\begin{align}
	& ^{C}_{0}D^{\alpha}_{t}S_p(t)\geqslant -\left(\frac{{\beta}\|I\|_\infty}{\|N\|_\infty}+{\sigma}+{\nu}\right)S_p(t), \text{ for all } t\geqslant{0},
\end{align}

Taking Laplace transformations to both sides, we get
\begin{equation}
\mathcal{L}\left[\frac{1}{\Gamma(1-\alpha)}\int_{0}^{t}S'_p(\tau)(t-\tau)^{-\alpha}d\tau\right]\geqslant \mathcal{L}\left[-\left(\frac{{\beta}\|I\|_\infty}{\|N\|_\infty}+{\sigma}+{\nu}\right)S_p(t)\right]
\end{equation}
and, using convolution and Laplace transformation properties we have
\begin{equation}
\nonumber\mathcal{L}\left[S'_p(t)\right]*\mathcal{L}\left[\frac{t^{-\alpha}}{\Gamma(1-\alpha)}\right]\geqslant -\left(\frac{{\beta}\|I\|_\infty}{\|N\|_\infty}+{\sigma}+{\nu}\right)\mathcal{L}\left[S_p(t)\right]
\end{equation}
\begin{equation}
\nonumber\left[S\mathcal{L}(S_p(t))-S_p(0)\right]{S^{\alpha-1}}\geqslant -\left(\frac{{\beta}\|I\|_\infty}{\|N\|_\infty}+{\sigma}+{\nu}\right)\mathcal{L}\left[S_p(t)\right]
\end{equation}
which gives
\begin{equation}
\mathcal{L}(S_p(t))\geqslant \frac{S^{\alpha-1}}{S^{\alpha}+\left(\frac{{\beta}\|I\|_\infty}{\|N\|_\infty}+{\sigma}+{\nu}\right)}S_p(0).
\end{equation} 

Finally, taking inverse transforms to both sides, we get
\begin{equation}
S_p(t)\geqslant S_p(0)E_\alpha\left[-\left(\frac{{\beta}\|I\|_\infty}{\|N\|_\infty}+{\sigma}+{\nu}\right)t^{\alpha}\right]\text{ for all } t\geqslant{0},
\end{equation}
where $E_\alpha(t)=\sum_{k=0}^{\infty} \frac{t^k}{\Gamma(1+\alpha k)}$ is the Mittag-Leffler function. Following the same procedure, one can get similar estimations for the remaining functions as
\begin{center}
\begin{tabular}{ll}
$I(t)\geqslant I(0)E_\alpha\left[-\left(\gamma_1+\gamma_2+\gamma_3+\gamma_4+\tau_1+{\nu}\right)t^{\alpha}\right]$,
& $I_p(t)\geqslant I_p(0)E_\alpha\left[-\left(\tau_3+{\nu}\right)t^{\alpha}\right]$,\\
$I_n(t)\geqslant I_n(0)E_\alpha\left[-\left(\tau_2+\phi_1+{\nu}\right)t^{\alpha}\right]$,
& $I_c(t)\geqslant I_c(0)E_\alpha\left[-\left(\tau_4+\phi_2+{\nu}\right)t^{\alpha}\right]$,\\
$R(t)\geqslant R(0)E_\alpha\left[-\left(\tau{R}+{\nu}\right)t^{\alpha}\right]$,
& $D(t)\geqslant D(0)E_\alpha\left[-\left({\nu}\right)t^{\alpha}\right]$, 
\end{tabular}
\end{center}
for all $t\ge 0$. 

\subsection{The Caputo-Fabrizio fractional derivative case}

Analogously, we can get similar estimations for the fractional version of the problem \eqref{eqn1}-\eqref{eqn7}. Let us start with the Caputo–Fabrizio fractional derivative. Taking \eqref{eqn1} and replacing the classical derivative with this one, we get
\begin{equation*}
^{CF}_{0}D^{\alpha}_{t}S_p(t)\geqslant -\left(\frac{{\beta}\|I\|_\infty}{\|N\|_\infty}+{\sigma}+{\nu}\right)S_p(t), \,\,\,\forall t\geqslant{0},
\end{equation*}
and taking Laplace transforms to both sides, we get
\begin{equation*}
\mathcal{L}\left[\frac{M(\alpha)}{(1-\alpha)}\int_{0}^{t}S'_p(\tau)Exp\left[-\frac{\alpha}{1-\alpha}(t-\tau)\right]d\tau\right]\geqslant \mathcal{L}\left[-\left(\frac{{\beta}\|I\|_\infty}{\|N\|_\infty}+{\sigma}+{\nu}\right)S_p(t)\right].
\end{equation*}

Then, using convolution and Laplace transformation properties, we can write
\begin{align*}
&\nonumber\mathcal{L}\left[S'_p(t)\right]*\mathcal{L}\left[\frac{M(\alpha)}{(1-\alpha)}Exp\left(-\frac{\alpha{t}}{1-\alpha}\right)\right]\geqslant -\left(\frac{{\beta}\|I\|_\infty}{\|N\|_\infty}+{\sigma}+{\nu}\right)\mathcal{L}\left[S_p(t)\right]\\
& \nonumber\left[S\mathcal{L}(S_p(t))-S_p(0)\right]\left(\frac{M(\alpha)}{(1-\alpha)}\right)\left(\frac{1-\alpha}{(1-\alpha)S+\alpha}\right)\geqslant -\left(\frac{{\beta}\|I\|_\infty}{\|N\|_\infty}+{\sigma}+{\nu}\right)\mathcal{L}\left[S_p(t)\right],\\
&\nonumber \mathcal{L}(S_p(t))\geqslant \frac{M(\alpha)}{SM(\alpha)+S(1-\alpha)\left(\frac{{\beta}\|I\|_\infty}{\|N\|_\infty}+{\sigma}+{\nu}\right)+\alpha\left(\frac{{\beta}\|I\|_\infty}{\|N\|_\infty}+{\sigma}+{\nu}\right)}S_p(0),\\
&\nonumber S_p(t)\geqslant \mathcal{L}^{-1}\left[\frac{M(\alpha)}{SM(\alpha)+S(1-\alpha)\left(\frac{{\beta}\|I\|_\infty}{\|N\|_\infty}+{\sigma}+{\nu}\right)+\alpha\left(\frac{{\beta}\|I\|_\infty}{\|N\|_\infty}+{\sigma}+{\nu}\right)}S_p(0)\right],\\
& S_p(t)\geqslant S_p(0)Exp\left[-\frac{\alpha\left(\frac{{\beta}\|I\|_\infty}{\|N\|_\infty}+{\sigma}+{\nu}\right)t}{M(\alpha)-(1-\alpha)\left(\frac{{\beta}\|I\|_\infty}{\|N\|_\infty}+{\sigma}+{\nu}\right)}\right] \text{ for all } t\geqslant{0}.
\end{align*}

Following the same procedures one can write for the other equations we get
\begin{center}
\begin{tabular}{ll}
$I(t)\geqslant I(0)Exp\left[\frac{-\alpha\left(\gamma_1+\gamma_2+\gamma_3+\gamma_4+\tau_1+{\nu}\right)t}{M(\alpha)-(1-\alpha)\left(\gamma_1+\gamma_2+\gamma_3+\gamma_4+\tau_1+{\nu}\right)}\right]$,
& $I_p(t)\geqslant I_p(0)Exp\left[\frac{-\left(\tau_3+{\nu}\right)t}{M(\alpha)-(1-\alpha)\left(\tau_3+{\nu}\right)}\right]$,\\
$I_n(t)\geqslant I_n(0)Exp\left[\frac{-\left(\tau_2+\phi_1+{\nu}\right)t}{M(\alpha)-(1-\alpha)\left(\tau_2+\phi_1+{\nu}\right)}\right]$,
& $I_c(t)\geqslant I_c(0)Exp\left[\frac{-\alpha\left(\tau_4+\phi_2+{\nu}\right)t}{M(\alpha)-(1-\alpha)\left(\tau_4+\phi_2+{\nu}\right)}\right]$,\\
$R(t)\geqslant R(0)Exp\left[-\frac{\left(\tau{R}+{\nu}\right)t}{M(\alpha)-(1-\alpha)\left({\nu}\right)}\right]$,
& $D(t)\geqslant D(0)Exp\left[-\frac{\alpha\left({\nu}\right)t}{M(\alpha)-(1-\alpha)\left({\nu}\right)}\right], \text{ for all } t\geqslant{0}$.
\end{tabular}
\end{center}\medskip

\subsection{The Atangana-Baleanu fractional derivative case}
We can repeat the same approach with the other kernels (power-law, exponential, and Mittag-Leffler kernels) obtaining the following estimations. For the Atangana-Baleanu fractional derivative:

\begin{center}
\begin{tabular}{ll}
$S_p(t)\geqslant S_p(0)E_\alpha\left[\frac{-\alpha\left(\frac{{\beta}\|I\|_\infty}{\|N\|_\infty}+{\sigma}+{\nu}\right)t^\alpha}{AB(\alpha)-(1-\alpha)\left(\frac{{\beta}\|I\|_\infty}{\|N\|_\infty}+{\sigma}+{\nu}\right)}\right]$,
& $I(t)\geqslant I(0)E_\alpha\left[\frac{-\alpha\left(\gamma_1+\gamma_2+\gamma_3+\gamma_4+\tau_1+{\nu}\right)t^\alpha}{AB(\alpha)-(1-\alpha)\left(\gamma_1+\gamma_2+\gamma_3+\gamma_4+\tau_1+{\nu}\right)}\right]$,\\
$I_p(t)\geqslant I_p(0)E_\alpha\left[\frac{-\left(\tau_3+{\nu}\right)t^\alpha}{AB(\alpha)-(1-\alpha)\left(\tau_3+{\nu}\right)}\right]$,
& $I_n(t)\geqslant I_n(0)E_\alpha\left[\frac{-\left(\tau_2+\phi_1+{\nu}\right)t^\alpha}{AB(\alpha)-(1-\alpha)\left(\tau_2+\phi_1+{\nu}\right)}\right]$,\\
$I_c(t)\geqslant I_c(0)E_\alpha\left[\frac{-\alpha\left(\tau_4+\phi_2+{\nu}\right)t^\alpha}{AB(\alpha)-(1-\alpha)\left(\tau_4+\phi_2+{\nu}\right)}\right]$,
& $R(t)\geqslant R(0)E_\alpha\left[-\frac{\left(\tau{R}+{\nu}\right)t^\alpha}{AB(\alpha)-(1-\alpha)\left({\nu}\right)}\right]$,\\
$D(t)\geqslant D(0)E_\alpha\left[-\frac{\alpha\left({\nu}\right)t^\alpha}{AB(\alpha)-(1-\alpha)\left({\nu}\right)}\right]$ for all $ t\ge 0$.
\end{tabular}
\end{center}

Repeating the procedure for fractal-fractional operators and denoting by $c$ the time component, we can get the following estimations for the power-law, exponential, and Mittag-Leffler kernels. For the power-law kernel we get:

\subsection{The fractal fractional derivative case}
\begin{center}
\begin{tabular}{ll}
$S_p(t)\geqslant S_p(0)E_\alpha\left[-c^{1-\beta}\left(\frac{{\beta}\|I\|_\infty}{\|N\|_\infty}+{\sigma}+{\nu}\right)t^{\alpha}\right]$
& $I(t)\geqslant I(0)E_\alpha\left[-c^{1-\beta}\left(\gamma_1+\gamma_2+\gamma_3+\gamma_4+\tau_1+{\nu}\right)t^{\alpha}\right]$,\\
$I_p(t)\geqslant I_p(0)E_\alpha\left[-c^{1-\beta}\left(\tau_3+{\nu}\right)t^{\alpha}\right]$,
& $I_n(t)\geqslant I_n(0)E_\alpha\left[-c^{1-\beta}\left(\tau_2+\phi_1+{\nu}\right)t^{\alpha}\right]$\\
$I_c(t)\geqslant I_c(0)E_\alpha\left[-c^{1-\beta}\left(\tau_4+\phi_2+{\nu}\right)t^{\alpha}\right]$,
& $R(t)\geqslant R(0)E_\alpha\left[-c^{1-\beta}\left(\tau{R}+{\nu}\right)t^{\alpha}\right]$,\\
$D(t)\geqslant D(0)E_\alpha\left[-c^{1-\beta}\left({\nu}\right)t^{\alpha}\right]$ for all $t\geqslant{0}$.
\end{tabular}
\end{center}

For an exponential kernel we get
\begin{align*}
&S_p(t)\geqslant S_p(0)Exp\left[-\frac{c^{1-\beta}\alpha\left(\frac{{\beta}\|I\|_\infty}{\|N\|_\infty}+{\sigma}+{\nu}\right)t}{M(\alpha)-(1-\alpha)\left(\frac{{\beta}\|I\|_\infty}{\|N\|_\infty}+{\sigma}+{\nu}\right)}\right],\\
&I(t)\geqslant I(0)Exp\left[\frac{-c^{1-\beta}\alpha\left(\gamma_1+\gamma_2+\gamma_3+\gamma_4+\tau_1+{\nu}\right)t}{M(\alpha)-(1-\alpha)\left(\gamma_1+\gamma_2+\gamma_3+\gamma_4+\tau_1+{\nu}\right)}\right],\\
&I_p(t)\geqslant I_p(0)Exp\left[\frac{-c^{1-\beta}\left(\tau_3+{\nu}\right)t}{M(\alpha)-(1-\alpha)\left(\tau_3+{\nu}\right)}\right],\\
&I_n(t)\geqslant I_n(0)Exp\left[\frac{-c^{1-\beta}\alpha\left(\tau_2+\phi_1+{\nu}\right)t}{M(\alpha)-(1-\alpha)\left(\tau_2+\phi_1+{\nu}\right)}\right]\\
&I_c(t)\geqslant I_c(0)Exp\left[\frac{-c^{1-\beta}\alpha\left(\tau_4+\phi_2+{\nu}\right)t}{M(\alpha)-(1-\alpha)\left(\tau_4+\phi_2+{\nu}\right)}\right],\\
&R(t)\geqslant R(0)Exp\left[-\frac{c^{1-\beta}\alpha\left(\tau{R}+{\nu}\right)t}{M(\alpha)-(1-\alpha)\left({\nu}\right)}\right],\\
&D(t)\geqslant D(0)Exp\left[-\frac{c^{1-\beta}\alpha\left({\nu}\right)t}{M(\alpha)-(1-\alpha)\left({\nu}\right)}\right],
\end{align*}
for all $t\geqslant{0}$, and for a Mittag-Leffler kernel we get
\begin{align*}
& S_p(t)\geqslant S_p(0)E_\alpha\left[\frac{-c^{1-\beta}\alpha\left(\frac{{\beta}\|I\|_\infty}{\|N\|_\infty}+{\sigma}+{\nu}\right)t^\alpha}{AB(\alpha)-(1-\alpha)\left(\frac{{\beta}\|I\|_\infty}{\|N\|_\infty}+{\sigma}+{\nu}\right)}\right],\\
& I(t)\geqslant I(0)E_\alpha\left[\frac{-c^{1-\beta}\alpha\left(\gamma_1+\gamma_2+\gamma_3+\gamma_4+\tau_1+{\nu}\right)t^\alpha}{AB(\alpha)-(1-\alpha)\left(\gamma_1+\gamma_2+\gamma_3+\gamma_4+\tau_1+{\nu}\right)}\right],\\
& I_p(t)\geqslant I_p(0)E_\alpha\left[\frac{-c^{1-\beta}\alpha\left(\tau_3+{\nu}\right)t^\alpha}{AB(\alpha)-(1-\alpha)\left(\tau_3+{\nu}\right)}\right],\\
& I_n(t)\geqslant I_n(0)E_\alpha\left[\frac{-c^{1-\beta}\alpha\left(\tau_2+\phi_1+{\nu}\right)t^\alpha}{AB(\alpha)-(1-\alpha)\left(\tau_2+\phi_1+{\nu}\right)}\right],\\
& I_c(t)\geqslant I_c(0)E_\alpha\left[\frac{-c^{1-\beta}\alpha\left(\tau_4+\phi_2+{\nu}\right)t^\alpha}{AB(\alpha)-(1-\alpha)\left(\tau_4+\phi_2+{\nu}\right)}\right],\\
& R(t)\geqslant R(0)E_\alpha\left[-\frac{c^{1-\beta}\alpha\left(\tau{R}+{\nu}\right)t^\alpha}{AB(\alpha)-(1-\alpha)\left({\nu}\right)}\right],\\
& D(t)\geqslant D(0)E_\alpha\left[-\frac{c^{1-\beta}\alpha\left({\nu}\right)t^\alpha}{AB(\alpha)-(1-\alpha)\left({\nu}\right)}\right] \text{ for all } t\ge 0.
\end{align*}

\section{Analysis of equilibrium points}\label{sec:equilibrium}
We have two equilibrium points in any of these system:
\begin{enumerate}
\item On the one hand, the disease-free equilibrium, with $I^*=0$, is given by
\begin{equation}\label{equilibfree}
	E_0^*=\left(\frac{\Pi}{\sigma+\nu},0,0,0,0,0,\frac{\sigma\Pi}{(\sigma+\nu)\nu}\right),
\end{equation}

\item On the other hand, the other equilibrium point  
$E_1^*=(S_p^*,I*,I_p^*,I_n^*,I_c^*,R^*,D^*)$ can be obtained by solving equations \eqref{eqn1}-\eqref{eqn7} with a null derivative, which gives
\begin{align*}
 S_p^*&=\frac{\lambda}{\beta},\\
 I^* &=\frac{\Pi}{\lambda}-\frac{({\sigma}+{\nu})}{\beta}, \\     I_p^*&=\frac{\gamma_3}{(\tau_3+{\nu})}\left(\frac{\Pi}{\lambda}-\frac{({\sigma}+{\nu})}{\beta}\right),\\ I_n^*&=\frac{\gamma_2}{(\tau_2+\phi_1+{\nu})}\left(\frac{\Pi}{\lambda}-\frac{({\sigma}+{\nu})}{\beta}\right),\\ I_c^*&=\frac{\gamma_3}{(\tau_4+\phi_2+{\nu})}\left(\frac{\Pi}{\lambda}-\frac{({\sigma}+{\nu})}{\beta}\right),\\ R^*&=\frac{1}{(\tau+\nu)}\left(\tau_1+\frac{\tau_2\gamma_2}{(\tau_2+\phi_1+{\nu})}+\frac{\tau_3\gamma_1}{(\tau_3+{\nu})}+\frac{\tau_4\gamma_3}{(\tau_4+\phi_2+{\nu})}\right)\left(\frac{\Pi}{\lambda}-\frac{({\sigma}+{\nu})}{\beta}\right),\\ D^*&=\frac{1}{\nu}\bigg[\sigma\left(\frac{\lambda}{\beta}\right)+\left(\frac{\Pi}{\lambda}-\frac{({\sigma}+{\nu})}{\beta}\right)\left(\frac{\phi_1\gamma_2}{(\tau_2+\phi_1+{\nu})}+\frac{\phi_2\gamma_3}{(\tau_4+\phi_2+{\nu})}+\gamma_4\right)\\ &+\frac{1}{(\tau+\nu)}\left(\tau_1+\frac{\tau_2\gamma_2}{(\tau_2+\phi_1+{\nu})}+\frac{\tau_3\gamma_1}{(\tau_3+{\nu})}+\frac{\tau_4\gamma_3}{(\tau_4+\phi_2+{\nu})}\right)\left(\frac{\Pi}{}-\frac{({\sigma}+{\nu})}{\beta}\right)\bigg]
\end{align*}
where $\lambda=\gamma_1+\gamma_2+\gamma_3+\gamma_4+\tau_1+\nu$.
\end{enumerate}

\section{Reproduction and strength numbers}\label{sec:rs_number}

Once computed the equilibrium points,
the reproduction number indicates the stability of a compartmental model at the equilibrium points. For computing this reproduction number, we use the next generation matrix method \cite{diekmann1990definition,van2002reproduction,van2017reproduction}
Let us consider equations \eqref{eqn2}-\eqref{eqn5}, involving $X=(I,I_p,I_n,I_c)$ for deriving the reproduction number. We can state them in a matrix form as $\frac{dX}{dt}=\mathcal{F}-\mathcal{V}$, where $\mathcal{F}$ gathers the rates of appearance of new infections in each compartment, and $\mathcal{V}$ shows the transition rates between compartments.\medskip 

Let us define the matrix $\mathcal{F}$ and $\mathcal{V}$ as 
$\mathcal{F}_{ij}=\frac{d\mathcal{F}_i(E_0^*)}{d X_j}$ and $V_{ij}=\frac{d\mathcal{V}_i(E_0^*)}{d X_j}$ 
at the disease-free equilibrium point $E_0^*$ shown in
\eqref{equilibfree} with $N=S_p+I+I_p+I_n+I_c+R+D$.
It can be seen that $
\mathcal{F}$ is non-null and $\mathcal{V}$ is a non-singular matrix \cite{berman1994nonnegative}.

\begin{equation}
\mathcal{F} =
\begin{pmatrix}
\frac{\beta{S_p}I}{N}   \\
0   \\
0 \\
0
\end{pmatrix},
\quad
\mathcal{V}=
\begin{pmatrix}
(\gamma_1+\gamma_2+\gamma_3+\gamma_4+\tau_1+{\nu})I   \\
-\gamma_1I-\phi_3I_c+(\tau_3+{\nu})I_p\\
-\gamma_2I-\phi_4I_c+(\tau_2+\phi_1+{\nu})I_n\\
-\gamma_3I+(\tau_4+\phi_2+{\nu})I_c
\end{pmatrix}
\end{equation}
where the entries of $\mathcal{F}$ and $\mathcal{V}$ can be obtained  $I,I_p,I_n,I_c$ at the free disease equilibrium point $E_0^*\left(\frac{\Pi}{\sigma+\nu},0,0,0,0,0,\frac{\sigma\Pi}{(\sigma+\nu)\nu}\right)$. This yields 
\begin{equation}
\mathcal{F}=
\begin{pmatrix}
\frac{\beta\nu}{\sigma+\nu} & 0 & 0 & 0   \\
0 & 0 & 0 & 0   \\
0 & 0 & 0 & 0 \\
0 & 0 & 0 & 0
\end{pmatrix},
\quad
\mathcal{V}=
\begin{pmatrix}
j_1 & 0 & 0 & 0   \\
-\gamma_1 & j_2 & 0 & \phi_3\\
-\gamma_2 & 0 & j_3 & 0\\
-\gamma_3 & 0 & 0 & j_4
\end{pmatrix}
\end{equation}
where $j_1=\gamma_1+\gamma_2+\gamma_3+\gamma_4+\tau_1+{\nu},j_2=\tau_3+{\nu},j_3=\tau_2+\phi_1+{\nu},j_4=\tau_4+\phi_2+{\nu}$.\medskip

Then the matrix $\mathcal{V}^{-1}$ will be
\begin{equation}
{\mathcal{V}^{-1}}=
\begin{pmatrix}
\frac{1}{j_1} & 0 & 0 & 0   \\
\frac{j_4\gamma_1}{j_1j_2j_4} & \frac{1}{j_2} & 0 & 0\\
\frac{j_4\gamma_2}{j_1j_3j_4} & 0 & \frac{1}{j_3} &0\\
\frac{\gamma_3}{j_1j_4} & 0 & 0 & \frac{1}{j_4}
\end{pmatrix}
\end{equation}

The matrix $FV^{-1}$ is called the next generation matrix. Its entry at position $(i,j)$ is the expected number of secondary infections of beings in compartment $i$ produced by an infected being from compartment $j$. Then, we get the reproduction number $\mathcal{R}_0=\rho(FV^{-1})$, where $\rho$ denotes the spectral radius of the matrix $FV^{-1}$, which yields 
\begin{equation}
	\mathcal{R}_0=\frac{\beta\nu}{(\sigma+\nu)j_1}
\end{equation}

Beyond the reproduction number $\mathcal{R}_0$, it is relevant to analyze in detail the matrix $\mathcal{F}_n$, which originates from the nonlinear segment of the infected classes.
\begin{align}
& \mathcal{F}_n =
\begin{pmatrix}
\frac{\partial^2 }{\partial I^2}\left(\frac{\beta{S_p}I}{N}\right)\bigg|_{E^0} & 0 & 0 & 0   \\
0 & 0 & 0 & 0   \\
0 & 0 & 0 & 0 \\
0 & 0 & 0 & 0
\end{pmatrix}
\quad
=
\begin{pmatrix}
\frac{-2\beta(\sigma+\nu)}{\Pi\left(1+\frac{\sigma}{\nu}\right)^2j_1} & 0 & 0 & 0   \\
0 & 0 & 0 & 0   \\
0 & 0 & 0 & 0 \\
0 & 0 & 0 & 0
\end{pmatrix},
\end{align}

This non-null term is known as the strength number $\mathcal{SN}$. It can be formally defined as $\mathcal{SN}=|F_nV^{-1}-\lambda{I}|=0$, which can be simplified to
\begin{equation}
	\mathcal{SN}=\frac{-2\beta(\sigma+\nu)}{\Pi\left(1+\frac{\sigma}{\nu}\right)^2j_1}
\end{equation}

If $\mathcal{SN}=0$, then it suggests that the spread might not undergo a review process, leading to a single magnitude before dissipating. A value of $\mathcal{SN}$ greater than zero indicates sufficient strength to initiate the renewal process, implying that the spread will exhibit multiple trends.  

\section{Stability analysis}
To describe the model's stability, we must first consider the system's Jacobian matrix at the model's equilibrium points. First, we investigate the stability at the free infection equilibrium point $E_0^*$.
Setting $A_1=\sigma+\nu$,  $A_2=\gamma_1+\gamma_2+\gamma_3+\gamma_4+\tau_1+\nu$, $A_3=\tau_3+\nu$, $A_4=\phi_1$, $A_5=\tau_4+\phi_2$, $A_6=\tau+\nu$, we get the Jacobian matrix at this point.
\begin{equation}
{J}=
\begin{pmatrix}
-A_1 & -\frac{\beta\nu}{\sigma+\nu} & 0 & 0 & 0 & 0 & 0 \\
0 & \frac{\beta\nu}{\sigma+\nu}-A_2 & 0 & 0 & 0 & 0 & 0\\
0 & \gamma_1 & -A_3 & 0 & 0 & 0 & 0\\
0 & \gamma_2 & 0 & -A_4 & 0 & 0 & 0\\
0 & \gamma_3 & 0 & 0 & -A_5 & 0 & 0\\
0 & \tau_1 & \tau_3 & \tau_2 & \tau_4 & A_6 & 0\\
\sigma & \gamma_4 & 0 & \phi_1 & \phi_2 & \tau & -\nu
\end{pmatrix}
\end{equation}

The eigenvalues calculated of $J(E_0^*)$ are $-\nu$, $A_1$, $\left(\frac{\beta\nu}{\sigma+\nu}-A_1\right)$, $-A_2$, $-A_3$, $-A_4$, $-A_5$. Except for $\lambda_3=\left(\frac{\beta\nu}{\sigma+\nu}-A_1\right)$, all of the eigenvalues have negative real parts, if it is rewrite as $\lambda_3=A_1\left(\frac{\beta\nu}{A_1(\sigma+\nu)}-1\right)$, which shows that if $\left(\frac{\beta\nu}{A_1(\sigma+\nu)}\right)<1$, then $\lambda_3$ can be less than zero, which coincides with the reproduction number $\mathcal{R}_0=\frac{\beta\nu}{A_1(\sigma+\nu)}$.\medskip

Now, let us analyze the behaviour around the endemic equilibrium point $E_1^*$. We define the endemic Lyapunov function setting all variables in the model to be at the harmful equilibrium $E_1^*$.

\begin{theorem}
	If the reproductive number $\mathcal{R}_0 > 1$, the endemic equilibrium point $E_1^*$ of (harmful information spreading) is globally asymptotically stable.
\end{theorem}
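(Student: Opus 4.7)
The plan is to exhibit an endemic Lyapunov function of Volterra–Goh type and then invoke LaSalle's invariance principle. Concretely, I would set
\begin{equation*}
L = c_1\,g\!\left(\frac{S_p}{S_p^*}\right)S_p^* + c_2\,g\!\left(\frac{I}{I^*}\right)I^* + c_3\,g\!\left(\frac{I_p}{I_p^*}\right)I_p^* + c_4\,g\!\left(\frac{I_n}{I_n^*}\right)I_n^* + c_5\,g\!\left(\frac{I_c}{I_c^*}\right)I_c^* + c_6\,g\!\left(\frac{R}{R^*}\right)R^* + c_7\,g\!\left(\frac{D}{D^*}\right)D^*,
\end{equation*}
where $g(u) = u - 1 - \ln u \geq 0$ (with equality iff $u=1$) and the coefficients $c_i>0$ are to be chosen later. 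Because $g\geq 0$ and each component of $E_1^*$ is strictly positive under $\mathcal{R}_0>1$ (which is the exact condition for $\Pi/\lambda - (\sigma+\nu)/\beta > 0$, guaranteeing $I^*>0$ and hence all starred values are positive), $L$ is a well-defined, positive-definite function on the interior of the feasible region vanishing only at $E_1^*$.

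Next I would differentiate $L$ along trajectories of \eqref{eqn1}--\eqref{eqn7}, obtaining
\begin{equation*}
\frac{dL}{dt} = \sum_i c_i\left(1-\frac{X_i^*}{X_i}\right)\frac{dX_i}{dt},
\end{equation*}
and substitute the right-hand sides of the model. The key algebraic step is to use the equilibrium identities at $E_1^*$ to replace the constant terms: for example $\Pi = \beta S_p^* I^*/N^* + (\sigma+\nu)S_p^*$, $\lambda I^* = \beta S_p^* I^*/N^*$, $\gamma_1 I^* = (\tau_3+\nu)I_p^*$, and similar relations for $I_n^*$, $I_c^*$, $R^*$, $D^*$. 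Substituting these relations collapses the linear constant contributions and produces expressions of the form $A\big(2 - u - 1/u\big)$ or more generally $A\big(n - \sum_j u_j\big)$ with $\prod_j u_j = 1$, which are $\leq 0$ by the arithmetic-geometric mean inequality.

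The delicate part, and the main obstacle, is selecting the weights $c_1,\dots,c_7$ so that the cross-terms in $-S_p^* I^*/(S_pI)\cdot \beta S_p I/N$ and similar flow terms combine into complete AM–GM patterns rather than leaving sign-indefinite remainders. A natural ansatz is $c_1=c_2=1$ and then choose $c_3,c_4,c_5$ proportional to $\gamma_1 I^*/(\tau_3+\nu)I_p^*$ etc.\ (all equal to $1$ after substitution of the equilibrium relations), with $c_6,c_7$ adjusted to absorb the $R$ and $D$ cross-terms. The $D$-compartment is only fed by the other classes and drained by $\nu D$, so its contribution reduces to a term of the form $-c_7\nu(D-D^*)^2/D \leq 0$, requiring no AM–GM manipulation. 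Provided the weights can indeed be tuned so that every residual term is of AM-GM type, one concludes $dL/dt \leq 0$ throughout the invariant region, with equality forcing $S_p=S_p^*$, $I=I^*$, $I_p=I_p^*$, $I_n=I_n^*$, $I_c=I_c^*$, and subsequently $R=R^*$, $D=D^*$.

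Finally, the largest invariant subset of $\{dL/dt=0\}$ is the singleton $\{E_1^*\}$, so by LaSalle's invariance principle every solution starting in the positively invariant compact region established in Section \ref{sec:positiveness_boundedness} converges to $E_1^*$. Combined with the positivity of $L$, this yields global asymptotic stability of $E_1^*$ whenever $\mathcal{R}_0>1$. The main obstacle remains the bookkeeping needed to align all mixed ratio terms into valid AM–GM groupings; if a direct choice of constant weights does not work, a fallback is to pass to a smaller subsystem (the core $(S_p,I,I_p,I_n,I_c)$ block, noting that $R$ and $D$ are driven variables) and apply LaSalle to the reduced system, then lift the conclusion to the full model.
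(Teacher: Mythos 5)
Your overall strategy --- the Volterra--Goh functional $g(u)=u-1-\ln u$, substitution of the equilibrium identities, AM--GM groupings, and LaSalle --- is exactly the route the paper takes (its function \eqref{eq:L} is your $L$ with all $c_i=1$, and its second stability theorem is your weighted variant with $c_3=A_1/\gamma_1$, etc.). However, your proposal has a genuine gap precisely at the step you flag as ``the delicate part'': you never verify that weights $c_1,\dots,c_7$ exist for which every residual term is of AM--GM type, and for this model they do not obviously exist. The paper itself cannot close this step either: its first argument only reaches the conditional conclusion ``if $\Omega<\Sigma$ then $dL/dt<0$,'' and its second argument requires the additional explicit hypothesis
\begin{equation*}
5-\frac{S_pN^*}{NS_p^*}-\frac{S_p^*}{S_p}+\frac{2I}{I^*}-\frac{I_p}{I_p^*}-\frac{I_n}{I_n^*}-\frac{I_c}{I_c^*}-\frac{II_p^*}{I^*I_p^*}-\frac{II_n^*}{I^*I_n^*}-\frac{II_c^*}{I^*I_c^*}+\frac{IN^*}{NI^*}\leqslant 0,
\end{equation*}
which is not implied by $\mathcal{R}_0>1$. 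The obstruction is structural: the incidence term $\beta S_pI/N$ with variable $N$, and the downstream compartments, produce mixed ratios (e.g.\ $N^*S_p/(NS_p^*)$ and $N^*I/(NI^*)$) whose product is not $1$, so they cannot be absorbed into complete AM--GM cycles by any constant choice of the $c_i$.

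A second, concrete error: your claim that the $D$-compartment contributes only $-c_7\nu(D-D^*)^2/D$ is false. Since $\dot D=\sigma S_p+\phi_1I_n+\phi_2I_c+\gamma_4I+\tau R-\nu D$, the term $c_7\bigl(1-\tfrac{D^*}{D}\bigr)\dot D$ contains sign-indefinite cross-ratios such as $-c_7\sigma S_p D^*/D$, which do not cancel and do not pair into product-one groupings; your formula would only hold if the inflow to $D$ were constant. (The same issue afflicts $R$, which is fed by four other classes.) Your fallback --- restricting to the core $(S_p,I,I_p,I_n,I_c)$ block and treating $R,D$ as driven --- is the right instinct and is essentially what the paper's second theorem does, but even there the argument does not yield the theorem as stated: an extra inequality beyond $\mathcal{R}_0>1$ is needed. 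So your plan reproduces the paper's approach but, as written, neither your sketch nor the paper's proof actually establishes unconditional global asymptotic stability from $\mathcal{R}_0>1$ alone.
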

\begin{proof}
To begin the prove of the theorem, consider the Lyapunov function given by:
\begin{align}\label{eq:L}
 {L}&\nonumber=\left({S_p}-{S_p^*}-{S_p^*}\log\frac{S_p}{S_p^*}\right)+\left({I}-{I*}-{I*}\log\frac{I}{I*}\right),\\ &\nonumber +\left({I_p}-{I_p^*}-{I_p^*}\log\frac{I_p}{I_p^*}\right)+\left({I_n}-{I_n^*}-{I_n^*}\log\frac{I_n}{I_n^*}\right),\\ & +\left({I_c}-{I_c^*}-{I_c^*}\log\frac{I_c}{I_c^*}\right)+\left({R}-{R^*}-{R^*}\log\frac{R}{R^*}\right)+\left({D}-{D*}-{D^*}\log\frac{D}{D^*}\right),
\end{align}

Differentiating $L$ with respect to $t$, 
the following is obtained

\begin{align}\label{eq:dL}
\frac{dL}{dt}&\nonumber=\left(1-\frac{S_p^*}{S_p}\right)\dot{S_p}+\left(1-\frac{I^*}{I}\right)\dot{I} +\left(1-\frac{I_p^*}{I_p}\right)\dot{I_p}+\left(1-\frac{I_n^*}{I_n}\right)\dot{I_n}\\ & +\left(1-\frac{I_c^*}{I_c}\right)\dot{I_c}+\left(1-\frac{R^*}{R}\right)\dot{R}+\left(1-\frac{D^*}{D}\right)\dot{D},
\end{align}
by replacing the derivatives as in \eqref{eqn1}-\eqref{eqn7}, the following can be obtained

\begin{align}
\frac{dL}{dt}&\nonumber=\left(1-\frac{S_p^*}{S_p}\right)\left(\Pi-\frac{{\beta}S_pI}{N}-(\sigma+{\nu})S_p\right)\\ &\nonumber+\left(1-\frac{I^*}{I}\right)\left(\frac{{\beta}S_pI}{N}-(\gamma_1+\gamma_2+\gamma_3+\gamma_4+\tau_1+{\nu})\right)\\ &\nonumber +\left(1-\frac{I_p^*}{I_p}\right)\left(\gamma_1I-(\tau_3+-{\nu})I_p\right)\\ &\nonumber+\left(1-\frac{I_n^*}{I_n}\right)\left(\gamma_2I-(\tau_2+\phi_1+{\nu})I_n\right)\\ &\nonumber +\left(1-\frac{I_c^*}{I_c}\right)\left(\gamma_3I-(\tau_4+\phi_2+{\nu})I_c\right)\\ &\nonumber +\left(1-\frac{R^*}{R}\right)\left(\tau_1I+\tau_2I_n+\tau_3I_p+\tau_4I_c-(\tau+{\nu})R\right)\\ & +\left(1-\frac{D^*}{D}\right)\left({\sigma}S_p+\phi_1I_n+\phi_2I_c+\gamma_4{I}+\tau{R}-{\nu}D\right),
\end{align}
substitute $S_p=S_p-S_p^*$, $I_n=I_n-I_n^*$, $I=I-I^*$, $I_p=I_p-I_p^*$, $I_c=I_c-I_c^*$, $R=R-R^*$, $D=D-D^*$ 

\begin{align}
\frac{dL}{dt}&=\Omega - \Sigma
\end{align}
where
\begin{align}
	\Omega &\nonumber=\Pi+\frac{S_p^*}{S_p}\frac{{\beta}(S_p-S_p^*)(I-I^*)}{N}+\frac{S_p^*}{S_p}(\sigma+{\nu})(S_p-S_p^*)+\frac{{\beta}(S_p-S_p^*)(I-I^*)}{N}\\ &\nonumber +\frac{I^*}{I}(\gamma_1+\gamma_2+\gamma_3+\gamma_4+\tau_1+{\nu})(I-I^*)+\gamma_1(I-I^*)\\ &\nonumber +\frac{I_p^*}{I_p}(\tau_3+{\nu})(I_p-I_p^*)+\gamma_2(I-I^*) +\frac{I_n^*}{I_n}(\tau_2+\phi_1+{\nu})(I_n-I_n^*)+\gamma_3(I-I^*)\\ &\nonumber +\frac{I_c^*}{I_c}(\tau_4+\phi_2+{\nu})(I_c-I_c^*)+\tau_1(I-I^*)+\tau_2(I_n-I_n^*)+\tau_3(I_p-I_p^*)+\tau_4(I_c-I_c^*)\\ &\nonumber  +\frac{R^*}{R}(\tau+{\nu})(R-R^*)+{\sigma}(S_p-S_p^*)+\phi_1(I_n-I_n^*)+\phi_2(I_c-I_c^*)+\gamma_4{(I-I^*)}+\tau(R-R^*)\\ & +\frac{D^*}{D}{\nu}(D-D^*)
\end{align}
and
\begin{align}
	\Sigma &\nonumber = \left(\frac{S_p^*}{S_p}\Pi+\frac{{\beta}(S_p-S_p^*)(I-I^*)}{N}+(\sigma+{\nu})(S_p-S_p^*)\right)\\ &\nonumber +\frac{I^*}{I}\left(\frac{{\beta}(S_p-S_p^*)(I-I^*)}{N}+(\gamma_1+\gamma_2+\gamma_3+\gamma_4+\tau_1+{\nu})(I-I^*)\right)\\ &\nonumber +\frac{I_p^*}{I_p}\left(\gamma_1(I-I^*)(I_c-I_c^*)\right)-(\tau_3+{\nu})(I_p-I_p^*)+\frac{I_n^*}{I_n}\left(\gamma_2(I-I^*)\right)\\ &\nonumber +(\tau_2+\phi_1+{\nu})(I_n-I_n^*)+\frac{I_c^*}{I_c}\gamma_3(I-I^*)+(\tau_4+\phi_2+{\nu})(I_c-I_c^*)\\ &\nonumber  +\frac{R^*}{R}\left(\tau_1(I-I^*)+\tau_2(I_n-I_n^*)+\tau_3(I_p-I_p^*)+\tau_4(I_c-I_c^*)\right)\\ &\nonumber +\frac{D^*}{D}\left({\sigma}(S_p-S_p^*)+\phi_1(I_n-I_n^*)+\phi_2(I_c-I_c^*)+\gamma_4{(I-I^*)}+\tau(R-R^*)\right)\\ & +(\tau+{\nu})(R-R^*)-{\nu}(D-D^*).
\end{align}

Clearly, at the equilibrium point we get $\frac{dL}{dt}=0$ as expected. If $\Omega<\Sigma$ then $\frac{dL}{dt}<0$ and the endemic equilibrium $E_1^*$ is asymptotically locally stable. Besides, from Lasalle's invariance notion, it can be deduced that $E_1^*$ achieves global asymptotic stability under the condition $\Omega < \Sigma$ on the region $\Gamma$ defined as

\begin{equation}
\Gamma=\left\{(S_p^*,I^*,I_p^*,I_n^*,I_c^*,R^*,D^*)\in \Gamma\,\,\, \text{such that}\,\,\,\frac{dL}{dt}=0\right\}.
\end{equation}

This often lacks a clear condition regarding the sign of the Lyapunov function's first derivative.  
However, an alternative method to establish a clear condition involves utilizing the equilibrium points, which will be demonstrated below to distinguish between the two techniques. The steady-state results of the model are as follows:

\begin{equation}
\begin{cases}
\Pi=\nu{S_p^*}+\frac{\beta{S_p^*I^*}}{N^*}, A_1I^*,\\
A_1I^*=\frac{\beta{S_p^*I^*}}{N^*},A_2I^*_p,\\
A_2I^*_p=\gamma_1I^*,A_3I_n^*,\\
A_3I_n^*=\gamma_2I^*,A_4I_c^*,\\
A_4I_c^*=\gamma_3I^*,
\end{cases}
\end{equation}
\end{proof}

\begin{theorem}
	If $R_0\geqslant{1}$ and
	$$\left(5-\frac{S_pN^*}{NS_p^*}-\frac{S_p^*}{S_p}+\frac{2I}{I^*}-\frac{I_p}{I_p^*}-\frac{I_n}{I_n^*}-\frac{I_c}{I_c^*}-\frac{II_p^*}{I^*I_p^*}-\frac{II_n^*}{I^*I_n^*}-\frac{II_c^*}{I^*I_c^*}+\frac{IN^*}{NI^*}\right)\leqslant{0},$$
then endemic equilibrium is globally asymptotically stable.
\end{theorem}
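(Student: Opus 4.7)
The plan is to adapt the weighted logarithmic Lyapunov construction from the preceding theorem: instead of assigning unit weight to every compartment as in \eqref{eq:L}, I would introduce tunable positive weights so that, after exploiting the endemic steady-state identities, the derivative along trajectories reduces precisely to a positive multiple of the bracketed expression in the hypothesis plus non-positive quadratic residues.

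First, I would set up
\begin{equation*}
L = c_1\!\left(S_p - S_p^* - S_p^*\log\tfrac{S_p}{S_p^*}\right) + c_2\!\left(I - I^* - I^*\log\tfrac{I}{I^*}\right) + \sum_{X \in \{I_p, I_n, I_c\}} c_X\!\left(X - X^* - X^*\log\tfrac{X}{X^*}\right),
\end{equation*}
with positive weights $c_1, c_2, c_{I_p}, c_{I_n}, c_{I_c}$ to be fixed in the next step. The compartments $R$ and $D$ are passively driven by the previous five, so their convergence will follow automatically from the convergence of the driving variables. Differentiating $L$ along \eqref{eqn1}--\eqref{eqn5} and expanding each $(1-X^*/X)\dot X$, I obtain $\dot L$ as the sum of linear-transition quadratic residues of the form $-c_X\mu_X(X-X^*)^2/X$, which are visibly non-positive, plus nonlinear incidence and inflow cross-terms involving $\beta S_p I/N$ and $\gamma_i I$.

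Second, I would substitute the endemic identities $\Pi = (\sigma+\nu)S_p^* + \beta S_p^* I^*/N^*$, $A_1 I^* = \beta S_p^* I^*/N^*$, $A_2 I_p^* = \gamma_1 I^*$, $A_3 I_n^* = \gamma_2 I^*$, $A_4 I_c^* = \gamma_3 I^*$ to eliminate $\Pi, \gamma_1, \gamma_2, \gamma_3$ in favour of equilibrium ratios. A canonical choice of weights, namely $c_1 = c_2 = 1$ together with $c_X = X^*/(\gamma_i I^*)$ for the inflow rate $\gamma_i$ feeding class $X$, causes the coefficients of the infected-class cross-terms to match and collapses the nonlinear piece of $\dot L$ into a constant positive multiple of
\begin{equation*}
\Theta := 5 - \frac{S_p N^*}{N S_p^*} - \frac{S_p^*}{S_p} + \frac{2I}{I^*} - \frac{I_p}{I_p^*} - \frac{I_n}{I_n^*} - \frac{I_c}{I_c^*} - \frac{I I_p^*}{I^* I_p^*} - \frac{I I_n^*}{I^* I_n^*} - \frac{I I_c^*}{I^* I_c^*} + \frac{I N^*}{N I^*},
\end{equation*}
which is exactly the hypothesis. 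Combined with the non-positive quadratic residues this yields $\dot L \le 0$ on the whole positive orthant. Invoking LaSalle's invariance principle, as was done in the previous theorem, the set $\{\dot L = 0\}$ forces $S_p = S_p^*, I = I^*, I_p = I_p^*, I_n = I_n^*, I_c = I_c^*$; plugging these into \eqref{eqn6}--\eqref{eqn7} then forces $R = R^*$ and $D = D^*$. The largest invariant subset of $\{\dot L = 0\}$ is therefore $\{E_1^*\}$, and the standing assumption $\mathcal{R}_0 \ge 1$ (ensuring $E_1^*$ lies in the biologically admissible region) upgrades local attractivity to global asymptotic stability.

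The main obstacle is the non-constant total population $N = S_p + I + I_p + I_n + I_c + R + D$: it breaks the classical Korobeinikov grouping in which each AM--GM cluster of ratios has product equal to one. This is precisely what prevents the two terms $-S_p N^*/(N S_p^*)$ and $+I N^*/(N I^*)$ from cancelling against each other and makes the non-positivity of $\Theta$ a genuine hypothesis rather than an automatic consequence of AM--GM. The algebraic core of the proof therefore lies in choosing the weights $c_X$ so that every stray $N^*/N$ factor produced by expanding $\beta S_p I/N - \beta S_p^* I^*/N^*$ assembles into exactly these two residual contributions, with no leakage into the other log-cross-terms; once this bookkeeping is in place, the rest of the argument reduces to the LaSalle step already rehearsed in the previous theorem.
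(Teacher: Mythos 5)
Your proposal follows essentially the same route as the paper: a weighted Volterra--Goh Lyapunov function over the five driving compartments $S_p, I, I_p, I_n, I_c$ (the paper uses weights $1,1,A_1/\gamma_1,A_1/\gamma_2,A_1/\gamma_3$), differentiation along trajectories, substitution of the endemic steady-state identities so that $\mathcal{L}'$ collapses to $\frac{\beta S_p^* I^*}{N^*}$ times the bracketed expression plus the AM--GM residue $\nu S_p^*\left(2-\frac{S_p}{S_p^*}-\frac{S_p^*}{S_p}\right)\le 0$, and then LaSalle's invariance principle. Your observation that the non-constant $N$ is what prevents the bracketed term from being automatically non-positive, forcing it to be a hypothesis, is correct and is implicit (though unstated) in the paper's argument.
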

\begin{proof} Let us consider the following nonlinear Lyapunov function for the proposed model. We recall that for the existence of the associated unique endemic equilibrium point E, we have $R_0 > 1$. 

\begin{align}
\mathcal{L}(t)&\nonumber=\int_{S_p^*}^{S_p}{\left(1-\frac{S_p^*}{x}\right)dx}+\int_{I^*}^{I}{\left(1-\frac{I^*}{x}\right)dx}\\ & +\frac{A_1}{\gamma_1}\int_{I_p^*}^{I_p}{\left(1-\frac{I_p^*}{x}\right)dx}+\frac{A_1}{\gamma_2}\int_{I_n^*}^{I_n}{\left(1-\frac{I_n^*}{x}\right)dx}+\frac{A_1}{\gamma_3}\int_{I_c^*}^{I_c}{\left(1-\frac{I_c^*}{x}\right)dx},
\end{align}
The derivative with respect to time of the above is

\begin{align}
\mathcal{L}^\prime(t)&\nonumber={\left(1-\frac{S_p^*}{S_p}\right)S_p^{\prime}(t)}+{\left(1-\frac{I^*}{I}\right)I^{\prime}(t)}\\ &\nonumber +\frac{A_1}{\gamma_1}{\left(1-\frac{I_p^*}{I_p}\right)I_p^{\prime}(t)}+\frac{A_1}{\gamma_2}{\left(1-\frac{I_n^*}{x}\right)I_n^{\prime}(t)}+\frac{A_1}{\gamma_3}{\left(1-\frac{I_c^*}{x}\right)I_c^{\prime}(t)},\\ &\nonumber =\left(1-\frac{S_p^*}{S_p}\right)\left(\Pi-\frac{{\beta}S_pI}{N}-{\nu}S_p\right)+\left(1-\frac{I^*}{I}\right)\left(\frac{{\beta}S_pI}{N}-(\gamma_1+\gamma_2+\gamma_3+\gamma_4+\tau_1+{\nu})I\right)\\ &\nonumber +\frac{A_1}{\gamma_1}\left(1-\frac{I_p^*}{I_p}\right)\left(\gamma_1I-(\tau_3+-{\nu})I_p\right)+\frac{A_1}{\gamma_2}\left(1-\frac{I_n^*}{I_n}\right)\left(\gamma_2I-(\tau_2+\phi_1+{\nu})I_n\right)\\ & +\frac{A_1}{\gamma_3}\left(1-\frac{I_c^*}{I_c}\right)\left(\gamma_3I-(\tau_4+\phi_2+{\nu})I_c\right),
\end{align}
by substituting the solutions one can have the following

After substituting the derivatives and simplifying, we get
\begin{align}
\mathcal{L}^\prime(t)&\nonumber = \frac{{\beta}S_p^*I^*}{N^*}\left(5-\frac{S_p^*}{S_p}-\frac{N^*S_p}{NS_p^*}-\frac{I_p^*I}{I^*I_P}-\frac{I_p}{I_p^*}-\frac{I_n^*I}{I^*I_n}-\frac{I_n}{I_n^*}-\frac{I_c^*I}{I^*I_c}-\frac{I_c}{I_c^*}+\frac{N^*I}{NI^*}\right)\\ & +\nu{S_p^*}\left(2-\frac{S_p}{S_p^*}-\frac{S_p^*}{S_p}\right)
\end{align}

Since the arithmetic mean is above the geometric mean, one gets the following estimation
\begin{align}
& \left(2-\frac{S_p}{S_p^*}-\frac{S_p^*}{S_p}\right)=-\frac{(S_p^*-S_p)^2}{S_p^*S_p}\leqslant{0},
\end{align}

In addition, if the following inequality is true,
\begin{equation}
\left(5-\frac{S_pN^*}{NS_p^*}-\frac{S_p^*}{S_p}+\frac{2I}{I^*}-\frac{I_p}{I_p^*}-\frac{I_n}{I_n^*}-\frac{I_c}{I_c^*}-\frac{II_p^*}{I^*I_p^*}-\frac{II_n^*}{I^*I_n^*}-\frac{II_c^*}{I^*I_c^*}+\frac{IN^*}{NI^*}\right)\leqslant{0}
\end{equation}
then $\mathcal{L}^\prime(t)\leqslant{0}$, for $R_0>1$. Thus, $\mathcal{L}(t)$ is a Lyapunov function in $\Omega$. Therefore, by Lasalle’s in variance principle, the following can be written as
\begin{align}
&\nonumber\lim\limits_{t\rightarrow{\infty}}S_p=S_p^*\,\,\, \lim\limits_{t\rightarrow{\infty}}I=I^*\,\,\,\lim\limits_{t\rightarrow{\infty}}I_p=I_p^*\\  &\lim\limits_{t\rightarrow{\infty}}I_n=I_n^*\,\,\, \lim\limits_{t\rightarrow{\infty}}I_c=I_c^*\,\,\,\lim\limits_{t\rightarrow{\infty}}R=R^*\,\,\,\, \,\,\,\lim\limits_{t\rightarrow{\infty}}D=D^*
\end{align}

So as to, as $t$ progresses towards infinity, every solution of the model converges toward its respective unique endemic equilibrium, corresponding to the relevant reproduction number. Specifically, when $R_0 > 1$, the endemic equilibrium point $E^*_1$ achieves global asymptotic stability.
\end{proof}

\subsection{Second derivative of Lyapunov}

Computing the second derivative function of the Lyapunov function \eqref{eq:L}, we can express it as
\begin{align}
\frac{d^2L}{dt^2}&\nonumber=\left(\frac{\dot{S_p}}{S_p}\right)^2{S_p^*} +\left(1-\frac{I^*}{I}\right)\left(\beta\left(\frac{(\dot{S_p}I+\dot{I}S_p)N-\dot{N}S_pI}{N^2}\right)\right)+\left(\frac{\dot{I}}{I}\right)^2{I^*} \\ &\nonumber+\left(1-\frac{I_p^*}{I_p}\right)(\gamma_1\dot{I})+\left(\frac{\dot{I_p}}{I_p}\right)^2{I_p^*}+\left(1-\frac{I_n^*}{I_n}\right)\left(\gamma_2\dot{I}\right)+\left(\frac{\dot{I_n}}{I_n}\right)^2{I_n^*}\\ &\nonumber +\left(1-\frac{I_c^*}{I_c}\right)\left(\gamma_3\dot{I}\right)+\left(\frac{\dot{I_c}}{I_c}\right)^2{I_c^*} +\left(\frac{\dot{R}}{R}\right)^2{R^*} +\left(1-\frac{R^*}{R}\right)\left(\tau_1\dot{I}+\tau_2\dot{I_n}+\tau_3\dot{I_p}+\tau_4\dot{I_c}\right)\\ &\nonumber +\left(\frac{\dot{D}}{D}\right)^2{D^*} +\left(1-\frac{D^*}{D}\right)\left(\sigma\dot{S_p}+\phi_1\dot{I_n}+\phi_2\dot{I_c}+\gamma_4\dot{I}+\tau\dot{R}\right)\\ &\nonumber -\bigg[
\left(1-\frac{S_p^*}{S_p}\right)\left((\sigma+\nu)\dot{S_p}+\beta\left(\frac{(\dot{S_p}I+\dot{I}S_p)N-\dot{N}S_pI}{N^2}\right)\right) +\left(1-\frac{I^*}{I}\right)\left(j_1\dot{I}\right)\\ &\nonumber +\left(1-\frac{I_p^*}{I_p}\right)(j_2I_p)+\left(1-\frac{I_n^*}{I_n}\right)\left(j_3\dot{I_n}\right)+\left(1-\frac{I_c^*}{I_c}\right)\left(j_4\dot{I_c}\right) +\left(1-\frac{R^*}{R}\right)\left(j_5\dot{R}\right)\\ & +\left(1-\frac{D^*}{D}\right)\left(\nu\dot{D}\right)\bigg],
\end{align}
and substituting \eqref{eqn1}-\eqref{eqn7}, it is possible to write as if there are separate components with a positive sign, $\Sigma_1$, and the components with a negative sign, $\Omega_1$, and express the sign of $\frac{d^2L}{dt^2}$ in terms of them.

\section{Equilibrium points for second order}

The second derivative provides valuable insights into curvatures within systems. Equilibrium points of the second-order derivatives of the solutions provides immensely valuable information in terms of stability. In particular, they are useful to study equilibrium points in dynamical systems when dealing with fractional operators that have memory effects. For instance, if the second derivative is positive, this yields stability.

\begin{align}
\frac{d^2 S_p}{dt^2}&\nonumber=-(\sigma+\nu)\dot{S_p}-\beta\left(\frac{(\dot{S_p}I+\dot{I}S_p)N-\dot{N}S_pI}{N^2}\right),\\  
\frac{d^2 I}{dt^2}&\nonumber=\beta\left(\frac{(\dot{S_p}I+\dot{I}S_p)N-\dot{N}S_pI}{N^2}\right)-j_1\dot{I}\\  \frac{d^2 I_p}{dt^2}&\nonumber=\gamma_1\dot{I}-j_2I_p,\\  \frac{d^2 I_n}{dt^2}&\nonumber=\gamma_2\dot{I}-j_3\dot{I_n},\\ \frac{d^2 I_c}{dt^2} &\nonumber=\gamma_3\dot{I}-j_4\dot{I_c},\\  \frac{d^2 R}{dt^2}&\nonumber=\tau_1\dot{I}+\tau_2\dot{I_n}+\tau_3\dot{I_p}+\tau_4\dot{I_c}-j_5\dot{R},\\  \frac{d^2 D}{dt^2}&=\sigma\dot{S_p}+\phi_1\dot{I_n}+\phi_2\dot{I_c}+\gamma_4\dot{I}+\tau\dot{R}-\nu\dot{D},
\end{align}

The following solution was obtained at the disease-free equilibrium point $E_0^*=\left(\frac{\Pi}{\sigma+\nu},0,0,0,0,0,\frac{\sigma\Pi}{(\sigma+\nu)\nu}\right)$.\medskip

\begin{align}
\frac{d S_p}{dt}&\nonumber=-\frac{j_1}{\sigma+\nu}\frac{d I}{dt},\\
\frac{d I_p}{dt}&\nonumber=\frac{\gamma_1}{j_2}\frac{d I}{dt},\\
\frac{d I_n}{dt}&\nonumber=\frac{\gamma_2}{j_3}\frac{d I}{dt},\\
\frac{d I_c}{dt}&\nonumber=\frac{\gamma_3}{j_4}\frac{d I}{dt},\\
\frac{d R}{dt}&\nonumber=\frac{1}{(\tau+\nu)}\left(\tau_1+\frac{\tau_2\gamma_2}{j_4}+\frac{\tau_3\gamma_1}{j_3}+\frac{\tau_4\gamma_3}{j_5}\right)\frac{d I}{dt}\\ \frac{d D}{dt}&=\frac{1}{\nu}\bigg[\sigma\dot{S_p}+\left(\frac{\phi_1\gamma_2}{j_4}+\frac{\phi_2\gamma_3}{j_5}+\gamma_4\right)\frac{d I}{dt}+\tau\frac{d R}{dt}\bigg]\\ 
&\nonumber
\end{align}
\begin{align}
\left(\Pi-\frac{{\beta}S_p^{**}I^{**}}{N^{**}}-({\sigma}+{\nu})S_p^{**}\right)&\nonumber=-\frac{j_1}{\sigma+\nu}\left(\frac{{\beta}S_p^{**}I^{**}}{N^{**}}-j_1I^{**}\right),\\
\left(\gamma_1I^{**}-j_2I_p^{**}\right)&\nonumber=\frac{\gamma_1}{j_2}\left(\frac{{\beta}S_p^{**}I^{**}}{N^{**}}-j_1I^{**}\right),\\
\left(\gamma_2I^{**}-j_3I_n^{**}\right)&\nonumber=\frac{\gamma_2}{j_3}\left(\frac{{\beta}S_p^{**}I^{**}}{N^{**}}-j_1I^{**}\right),\\
\left(\gamma_3I^{**}-j_4I_c^{**}\right)&\nonumber=\frac{\gamma_3}{j_4}\left(\frac{{\beta}S_p^{**}I^{**}}{N^{**}}-j_1I^{**}\right),\\
\tau_1I^{**}+\tau_2I_n^{**}+\tau_3I_p^{**}+\tau_4I_c^{**}-j_5R^{**}&\nonumber=\frac{1}{(\tau+\nu)}\left(\tau_1+\frac{\tau_2\gamma_2}{j_4}+\frac{\tau_3\gamma_1}{j_3}+\frac{\tau_4\gamma_3}{j_5}\right)\\ &\nonumber\times\left(\frac{{\beta}S_p^{**}I^{**}}{N^{**}}-j_1I^{**}\right)\\ {\sigma}S_p^{**}+\phi_1I_n^{**}+\phi_2I_c^{**}+\gamma_4{I^{**}}+\tau{R^{**}}-{\nu}D^{**}&\nonumber=\frac{1}{\nu}\bigg[\sigma\left(\Pi-\frac{{\beta}S_p^{**}I^{**}}{N^{**}}-({\sigma}+{\nu})S_p^{**}\right)\\ &\nonumber +\left(\frac{\phi_1\gamma_2}{j_4}+\frac{\phi_2\gamma_3}{j_5}+\gamma_4\right)\left(\frac{{\beta}S_p^{**}I^{**}}{N^{**}}-j_1I^{**}\right)\\ & +\tau\tau_1I^{**}+\tau_2I_n^{**}+\tau_3I_p^{**}+\tau_4I_c^{**}-j_5R^{**}\bigg]
\end{align}

While equilibrium points have been effectively utilized to glean insights into the spread of various diseases, it's crucial to acknowledge that these points are derived solely from the first derivative. This derivative, naturally, offers only an indication of the extent of change. On the other hand, $\,\,S_p^{**},I^{**},I_p^{**},I_n^{**},I_c^{**},R^{**}\,\,$ and $\,\,D^{**}\,\,$ can provide some relevant data concerning infection points that might be used to identify waves. As a result, if we do have two equilibrium points, people can consider two waves.\\

Now, to check the condition under which the following $\,\,I^{**}<0,\,\,\,I^{**}>0\,\,\,I^{**}=0\,\,$ holds, have to evaluate the sign test for $\,\,I^{**},\,\,\,I_p^{**},\,\,\,I_n^{**}\,\,\,I_c^{**}\,\,$.\\
\begin{equation}
I^{**}<0\implies \beta\left(\frac{(\dot{S_p}I+\dot{I}S_p)N-\dot{N}S_pI}{N^2}\right)-j_1\dot{I}<0,
\end{equation}
substituting the values of $\,\,\dot{S_p}\,\,\,\dot{I}\,\,$ and $\,\,\dot{N}=\Pi-\nu{N}\,\,$ one can have
\begin{align}
 \left(\frac{\beta\dot{S_p}I}{N}+\frac{\beta\dot{I}S_p}{N}-\frac{\beta\dot{N}S_pI}{N^2}\right)-j_1\beta\left({\frac{{\beta}S_pI}{N}-j_1I}\right)&\nonumber<0,\\  \nonumber\left(\beta\left({\Pi-\frac{{\beta}S_pI}{N}-({\sigma}+{\nu})S_p}\right)\frac{I}{N}+\beta\left({\frac{{\beta}S_pI}{N}-j_1I}\right)\frac{S_p}{N}-\frac{\beta(\Pi-\nu{N})S_pI}{N^2}\right)\\ -j_1\beta\left({\frac{{\beta}S_pI}{N}-j_1I}\right)&<0,
\end{align}
by multiplying both side by $N$ and recall that $I\ne{0}$ one can have the following after simplification
\begin{align}
&\beta\Pi+j_1^2N+\nu\beta{S_p}<\beta(\sigma+\nu)S_p+2\beta{j_1}S_p+\frac{\beta^2S_pI}{N}+\frac{\beta\Pi{S_p}}{N}
\end{align}
Setting $N>S_p$ and $N>I$, use in the above one can have the following
\begin{align}
&j_1^2N<\beta\sigma{S_p}+2j_1S_p+{\beta^2S_p}\implies S_p>\frac{j_1^2}{\beta\sigma+2\beta{j_1}+{\beta^2}}
\end{align}
When these criteria are fulfilled, $I^{**}<0$ signifies the presence of a local maximum in the function. Conversely, if $I^{**}> 0$, it indicates the existence of a local minimum. When $I^{**}=0$, it suggests a potential inflection point within the function. However, we express these conditions in the following manner:
\begin{align}
&\nonumber \frac{d^2 I_p}{dt^2}<0\implies \gamma_1\dot{I}-j_2\dot{I_p}<0,\\
&\nonumber \gamma_1\left(\frac{{\beta}S_pI}{N}-j_1I\right)-j_2(\gamma_1I-j_2I_p)<0\\
&\frac{\gamma_1{\beta}S_pI}{N}-\gamma_1j_1I-j_2\gamma_1I+j_2^2I_p<0,
\end{align}
using $I>I_p$ given by the definition one can write as
 \begin{align}
 &\nonumber\frac{\gamma_1{\beta}S_pI}{N}+j_2^2I_p<\gamma_1(j_1+j_2)I,\\
 &\nonumber\frac{\gamma_1{\beta}S_pI}{N}+j_2^2I<\gamma_1(j_1+j_2)I+j_2^2I,\\
 &\frac{\gamma_1{\beta}S_p}{N}<\gamma_1(j_1+j_2)\implies S_p<\frac{(j_1+j_2)N}{{\beta}},
 \end{align}
\begin{align}
&\nonumber \frac{d^2 I_n}{dt^2}<0\implies \gamma_2\dot{I}-j_3\dot{I_n}<0,\\
&\nonumber \gamma_2\left(\frac{{\beta}S_pI}{N}-j_1I\right)-j_3(\gamma_2I-j_3I_n)<0\\
&\nonumber\frac{\gamma_2{\beta}S_pI}{N}-\gamma_2j_1I-j_3\gamma_2I+j_3^2I_n<0,\\
&\nonumber\frac{\gamma_2{\beta}S_pI}{N}+j_3^2I_n<\gamma_2(j_1+j_3)I,\\
&\nonumber\frac{\gamma_2{\beta}S_pI}{N}+j_3^2I<\gamma_2(j_1+j_3)I+j_3^2I,\\
&\frac{\gamma_2{\beta}S_p}{N}<\gamma_2(j_1+j_3)\implies S_p<\frac{(j_1+j_3)N}{{\beta}},
\end{align}
\begin{align}
&\nonumber \frac{d^2 I_c}{dt^2}<0\implies \gamma_3\dot{I}-j_4\dot{I_c}<0,\\
&\nonumber \gamma_3\left(\frac{{\beta}S_pI}{N}-j_1I\right)-j_4(\gamma_3I-j_4I_c)<0\\
&\nonumber\frac{\gamma_3{\beta}S_pI}{N}-\gamma_3j_1I-j_4\gamma_3I+j_4^2I_c<0,\\
&\nonumber\frac{\gamma_3{\beta}S_pI}{N}+j_4^2I_n<\gamma_3(j_1+j_4)I,\\
&\nonumber\frac{\gamma_3{\beta}S_pI}{N}+j_4^2I<\gamma_3(j_1+j_4)I+j_4^2I,\\
&\frac{\gamma_3{\beta}S_p}{N}<\gamma_3(j_1+j_4)\implies S_p<\frac{(j_1+j_4)N}{{\beta}},
\end{align}
Moreover, to attain the maximum value of time for the four classes, the following conditions must hold true:
\begin{equation}
	\frac{j_1^2}{\beta\sigma+2\beta{j_1}+{\beta^2}}<S_p<min\left\{\frac{(j_1+j_2)N}{{\beta}},\frac{(j_1+j_3)N}{{\beta}},\frac{(j_1+j_4)N}{{\beta}}\right\}
\end{equation}

\section{Numerical iterative scheme}
\label{sec:numerical}

In order to illustrate some results, we show the discretization and graphs of the solutions provided by the fractal fractional model with their three respetive kernels.\medskip

\subsection{Power law}
\begin{align}
	^{FFP}_0D^{\alpha,\beta}_tS_p(t)&=\Pi-\frac{\beta{S_p}{I}}{N}-(\sigma+\nu)S_p\\
	^{RL}_{0}D^{\alpha}_t{S_p(t)}&=\beta{t^{\beta-1}}S_p^{\prime}=\beta{t^{\beta-1}}\left[\Pi-\frac{\beta{S_p}{I}}{N}-(\sigma+\nu)S_p\right]
\end{align}

Let $F_1(t,S_p,I)=^{RL}_{0}D^{\alpha}_t{S_p(t)}=\beta{t^{\beta-1}}\left[\Pi-\frac{\beta{S_p}{I}}{N}-(\sigma+\nu)S_p\right]$ and applying the Riemann Liouville integral definition to get
\begin{align}
\nonumber S_p(t)&=S_p(0)+\frac{1}{\Gamma(\alpha)}\int_{0}^{t}F_1(\tau,S_p,I)(t-\tau)^{\alpha-1}d\tau \\ S_p(t_{n+1})&=S_p(0)+\frac{1}{\Gamma(\alpha)}\int_{0}^{t}F_1(\tau,S_p,I)(t-\tau)^{\alpha-1}d\tau\\ & \nonumber \text{and with of help discretization one can obtained the following equations} \\ &=S_p(0)+\frac{1}{\Gamma(\alpha)}\sum_{j=0}^{n}\int_{t_j}^{t_{j+1}}A_1(\tau,S_p,I)(t_{n+1}-\tau)^{\alpha-1}d\tau
\end{align}

Using two steps Lagrange interpolation polynomial approximations
\begin{equation}
P_j=\left(\frac{\tau-t_{j-1}}{t_j-t_{j-1}}\right)F_1(t_j,S_p,I)-\left(\frac{\tau-t_{j}}{t_j-t_{j-1}}\right)F_1(t_{j-1},S_p,I)
\end{equation}

\begin{align}
 \nonumber S_p(t_{n+1})&=S_p(0)+\frac{1}{\Gamma(\alpha)}\int_{0}^{t_{n+1}}F_1(\tau,S_p,I)(t_{n+1}-\tau)^{\alpha-1}d\tau\\ &\nonumber=S_p(0)+\frac{1}{\Gamma(\alpha)}\sum_{j=0}^{n}\int_{t_j}^{t_{j+1}}\bigg[\left(\frac{\tau-t_{j-1}}{t_j-t_{j-1}}\right)F_1(t_j,S_p,I)-\left(\frac{\tau-t_{j}}{t_j-t_{j-1}}\right)F_1(t_{j-1},S_p,I)\bigg](t_{n+1}-\tau)^{\alpha-1}d\tau
\end{align}
by evaluating the integrals and substitute $h=t_j-t_{j-1},\,\,t_j=jh$ and simplify to get have simplification obtained the following
\begin{align}
\nonumber S_p(t_{n+1})& =S_p(0)+\frac{h^\alpha}{\Gamma(\alpha+2)}\bigg[\sum_{j=0}^{n}F_1(t_j,S_p,I)\big[(-j+n+1)^\alpha(-j+n+\alpha+2)-(-j+n)^\alpha(-j+n+2\alpha+2)\big]\\ & -\big[\sum_{j=0}^{n}F_1(t_{j-1},S_p,I)\big[(-j+n+1)^{\alpha+1}-(-j+n)^\alpha(-j+n+\alpha+)\big]\bigg]
\end{align}
Following the same procedure to obtained the following
\begin{align}
\nonumber I(t_{n+1})& =I(0)+\frac{h^\alpha}{\Gamma(\alpha+2)}\bigg[\sum_{j=0}^{n}F_2(t_j,S_p,I)\big[(-j+n+1)^\alpha(-j+n+\alpha+2)-(-j+n)^\alpha(-j+n+2\alpha+2)\big]\\ & -\big[\sum_{j=0}^{n}F_2(t_{j-1},S_p,I)\big[(-j+n+1)^{\alpha+1}-(-j+n)^\alpha(-j+n+\alpha+)\big]\bigg]
\end{align}
\begin{align}
\nonumber I_p(t_{n+1})& =I_p(0)+\frac{h^\alpha}{\Gamma(\alpha+2)}\bigg[\sum_{j=0}^{n}F_3(t_j,I,I_p)\big[(-j+n+1)^\alpha(-j+n+\alpha+2)-(-j+n)^\alpha(-j+n+2\alpha+2)\big]\\ & -\big[\sum_{j=0}^{n}F_3(t_{j-1},I,I_p)\big[(-j+n+1)^{\alpha+1}-(-j+n)^\alpha(-j+n+\alpha+)\big]\bigg]
\end{align}
\begin{align}
\nonumber I_n(t_{n+1})& =I_n(0)+\frac{h^\alpha}{\Gamma(\alpha+2)}\bigg[\sum_{j=0}^{n}F_4(t_j,I,I_n)\big[(-j+n+1)^\alpha(-j+n+\alpha+2)-(-j+n)^\alpha(-j+n+2\alpha+2)\big]\\ & -\big[\sum_{j=0}^{n}F_4(t_{j-1},I,I_n)\big[(-j+n+1)^{\alpha+1}-(-j+n)^\alpha(-j+n+\alpha+)\big]\bigg]
\end{align}
\begin{align}
\nonumber I_c(t_{n+1})& =I_c(0)+\frac{h^\alpha}{\Gamma(\alpha+2)}\bigg[\sum_{j=0}^{n}F_5(t_j,I,I_c)\big[(-j+n+1)^\alpha(-j+n+\alpha+2)-(-j+n)^\alpha(-j+n+2\alpha+2)\big]\\ & -\big[\sum_{j=0}^{n}F_5(t_{j-1},I,I_c)\big[(-j+n+1)^{\alpha+1}-(-j+n)^\alpha(-j+n+\alpha+)\big]\bigg]
\end{align}
\begin{align}
\nonumber R(t_{n+1})& =R(0)+\frac{h^\alpha}{\Gamma(\alpha+2)}\bigg[\sum_{j=0}^{n}F_6(t_j,I,I_p,I_n,I_c,R)\big[(-j+n+1)^\alpha(-j+n+\alpha+2)-(-j+n)^\alpha(-j+n+2\alpha+2)\big]\\ & -\big[\sum_{j=0}^{n}F_6(t_{j-1},I,I_p,I_n,I_c,R)\big[(-j+n+1)^{\alpha+1}-(-j+n)^\alpha(-j+n+\alpha+)\big]\bigg]
\end{align}
\begin{align}
\nonumber D(t_{n+1})& =D(0)+\frac{h^\alpha}{\Gamma(\alpha+2)}\bigg[\sum_{j=0}^{n}A_4(t_j,S_p,I,I_n,I_c,R,D)\big[(-j+n+1)^\alpha(-j+n+\alpha+2)\\
& -(-j+n)^\alpha(-j+n+2\alpha+2)\big]\\ & -\big[\sum_{j=0}^{n}A_4(t_{j-1},S_p,I,I_n,I_c,R,D)\big[(-j+n+1)^{\alpha+1}-(-j+n)^\alpha(-j+n+\alpha+)\big]\bigg]
\end{align}
\subsection{Exponential kernel}
\begin{align}
^{FFE}_0D^{\alpha,\beta}_tS_p(t)&=\Pi-\frac{\beta{S_p}{I}}{N}-(\sigma+\nu)S_p\\
^{CF}_{0}D^{\alpha}_t{S_p(t)}&=\beta{t^{\beta-1}}S_p^{\prime}=\beta{t^{\beta-1}}\left[\Pi-\frac{\beta{S_p}{I}}{N}-(\sigma+\nu)S_p\right]
\end{align}
Let $M_1(t,S_p,I)=^{CF}_{0}D^{\alpha}_t{S_p(t)}=\beta{t^{\beta-1}}\left[\Pi-\frac{\beta{S_p}{I}}{N}-(\sigma+\nu)S_p\right]$
\begin{align}
\therefore S_p(t)&=S_p(0)+\frac{1-\alpha}{M(\alpha)}M_1(\tau,S_p,I)+\frac{\alpha}{M(\alpha)}\int_{0}^{t}M_1(\tau,S_p,I)d\tau\\ & \nonumber \text{and with of help discretization one can obtained the following equations} \\ S_p(t_{n+1}) &=S_p(0)+\frac{1-\alpha}{M(\alpha)}M_1(t_n,S_p^n,I^n)+\frac{\alpha}{M(\alpha)}\int_{0}^{t_{n+1}}M_1(\tau,S_p,I)d\tau\\ S_p(t_{n}) &=S_p(0)+\frac{1-\alpha}{M(\alpha)}M_1(t_n,S_p^{n-1},I^{n-1})+\frac{\alpha}{M(\alpha)}\int_{0}^{t_{n}}M_1(\tau,S_p,I)d\tau
\end{align}
taking the difference of the above last two equation to have the following
\begin{align}
& S_p(t_{n+1})-S_p(t_{n})=\frac{1-\alpha}{M(\alpha)}\left[M_1(t_n,S_p^n,I^n)-M_1(t_n,S_p^{n-1},I^{n-1})\right]+\frac{\alpha}{M(\alpha)}\int_{t_n}^{t_{n+1}}M_1(\tau,S_p,I)d\tau
\end{align}
Using two steps Lagrange polynomial Approximations
\begin{align}
\nonumber S_p(t_{n+1})&=S_p(t_{n})+\frac{1-\alpha}{M(\alpha)}\left[M_1(t_n,S_p^n,I^n)-M_1(t_n,S_p^{n-1},I^{n-1})\right]\\ &\nonumber +\frac{\alpha}{M(\alpha)}\bigg(M_1(t_n,S_p,I)\int_{t_n}^{t_{n+1}}\left(\frac{\tau-t_{n-1}}{t_n-t_{n-1}}\right)d\tau\\ &-M_1(t_{n-1},S_p,I)\int_{t_n}^{t_{n+1}}\left(\frac{\tau-t_{n}}{t_n-t_{n-1}}\right)d\tau\bigg)\\ &\nonumber \text{by evaluating the integrals and substitute $h=t_n-t_{n-1},\,\,t_n=nh$ and simplify to get have}\\ &\nonumber =S_p(t_{n})+\frac{1-\alpha}{M(\alpha)}\left[M_1(t_n,S_p^n,I^n)-M_1(t_n,S_p^{n-1},I^{n-1})\right]\\ &+\frac{\alpha}{M(\alpha)}\bigg(\frac{h(h+1)}{2}M_1(t_n,S_p,I)-\left(\frac{(2n+1)h^2}{2}+nh\right)M_1(t_{n-1},S_p,I)\bigg)
\end{align}
by adopting the same procedure one can obtained the following
\begin{align}
\nonumber I(t_{n+1}) &=I(t_{n})+\frac{1-\alpha}{M(\alpha)}\left[M_2(t_n,S_p^n,I^n)-M_2(t_n,S_p^{n-1},I^{n-1})\right]\\ &+\frac{\alpha}{M(\alpha)}\bigg(\frac{h(h+1)}{2}M_2(t_n,S_p,I)-\left(\frac{(2n+1)h^2}{2}+nh\right)M_2(t_{n-1},S_p,I)\bigg)
\end{align}
\begin{align}
\nonumber I_p(t_{n+1}) &=I_p(t_{n})+\frac{1-\alpha}{M(\alpha)}\left[M_3(t_n,I^n,I_p^n)-M_3(t_n,I^{n-1},I_p^{n-1})\right]\\ &+\frac{\alpha}{M(\alpha)}\bigg(\frac{h(h+1)}{2}M_3(t_n,I,I_p)-\left(\frac{(2n+1)h^2}{2}+nh\right)M_3(t_{n-1},I,I_p)\bigg)
\end{align}
\begin{align}
\nonumber I_n(t_{n+1}) &=I_n(t_{n})+\frac{1-\alpha}{M(\alpha)}\left[M_4(t_n,I^n,I_n^n)-M_4(t_n,I^{n-1},I_n^{n-1})\right]\\ &+\frac{\alpha}{M(\alpha)}\bigg(\frac{h(h+1)}{2}M_4(t_n,I,I_n)-\left(\frac{(2n+1)h^2}{2}+nh\right)M_4(t_{n-1},I,I_n)\bigg)
\end{align}
\begin{align}
\nonumber I_c(t_{n+1}) &=I_c(t_{n})+\frac{1-\alpha}{M(\alpha)}\left[M_5(t_n,I^n,I_c^n)-M_5(t_n,I^{n-1},I_c^{n-1})\right]\\ &+\frac{\alpha}{M(\alpha)}\bigg(\frac{h(h+1)}{2}M_5(t_n,S_p,I)-\left(\frac{(2n+1)h^2}{2}+nh\right)M_5(t_{n-1},S_p,I)\bigg)
\end{align}
\begin{align}
\nonumber R(t_{n+1}) &=R(t_{n})+\frac{1-\alpha}{M(\alpha)}\left[M_6(t_n,I^n,I_p^n,I_n^n,I_c^n,R^n)-M_6(t_n,I^{n-1},I_p^{n-1},I_n^{n-1},I_c^{n-1},R^{n-1})\right]\\ &+\frac{\alpha}{M(\alpha)}\bigg(\frac{h(h+1)}{2}M_6(t_n,I,I_p,I_n,I_c,R)-\left(\frac{(2n+1)h^2}{2}+nh\right)M_6(t_{n-1},I,I_p,I_n,I_c,R)\bigg)
\end{align}
\begin{align}
\nonumber D(t_{n+1}) &=D(t_{n})+\frac{1-\alpha}{M(\alpha)}\left[M_7(t_n,S_p^n,I^n,I_n^n,I_c^n,R^n,D^n)-M_7(t_n,S_p^{n-1},I^{n-1},I_n^{n-1},I_c^{n-1},R^{n-1},D^{n-1})\right]\\ &+\frac{\alpha}{M(\alpha)}\bigg(\frac{h(h+1)}{2}M_7(t_n,S_p,I,I_n,I_c,R,D)-\left(\frac{(2n+1)h^2}{2}+nh\right)M_7(t_{n-1},S_p,I,I_n,I_c,R,D)\bigg)
\end{align}
\subsection{Mittag-Leffler kernel}
\begin{align}
^{FFM}_0D^{\alpha,\beta}_tS_p(t)&=\Pi-\frac{\beta{S_p}{I}}{N}-(\sigma+\nu)S_p\\
^{AB}_{0}D^{\alpha}_t{S_p(t)}&=\beta{t^{\beta-1}}S_p^{\prime}=\beta{t^{\beta-1}}\left[\Pi-\frac{\beta{S_p}{I}}{N}-(\sigma+\nu)S_p\right]
\end{align}
Let $N_1(t,S_p,I)=^{AB}_{0}D^{\alpha}_t{S_p(t)}=\beta{t^{\beta-1}}\left[\Pi-\frac{\beta{S_p}{I}}{N}-(\sigma+\nu)S_p\right]$, and apply AB integral definition
\begin{align}
\therefore  \nonumber S_p(t)-S_p(0)&=\frac{1-\alpha}{AB(\alpha)}N_1(t,S_p,I)+\frac{\alpha}{AB(\alpha)\Gamma(\alpha)}\int_{0}^{t}N_1(\tau,S_p,I)(t-\tau)^{\alpha-1}d\tau\\ &\nonumber \text{by discretizing the above at $t_{n+1}$ to have the following}\\ S_p(t_{n+1}) &=S_p(0)+\frac{1-\alpha}{AB(\alpha)}N_1(\tau,S_p^n,I^n)+\frac{\alpha}{AB(\alpha)\Gamma(\alpha)}\int_{0}^{t_{n+1}}N_1(\tau,S_p,I)(t_{n+1}-\tau)^{\alpha-1}d\tau
\end{align}
within the interval $[t_n,t_{n+1}]$ and by using two steps Lagrange polynomial Approximations on the function $N_1(\tau,S_p,I)$ one can gets
\begin{align}
\nonumber S_p(t_{n+1})&=S_p(0)+\frac{1-\alpha}{AB(\alpha)}N_1(\tau,S_p^n,I^n)+\frac{\alpha}{AB(\alpha)\Gamma(\alpha)}\int_{0}^{t_{n+1}}N_1(\tau,S_p,I)(t_{n+1}-\tau)^{\alpha-1}d\tau\\ &\nonumber=S_p(0)+\frac{1-\alpha}{AB(\alpha)}N_1(\tau,S_p^n,I^n)+\frac{\alpha}{AB(\alpha)\Gamma(\alpha)}\sum_{j=0}^{n}\int_{t_j}^{t_{j+1}}\bigg[\left(\frac{\tau-t_{j-1}}{t_j-t_{j-1}}\right)N_1(t_j,S_p,I)\\ &-\left(\frac{\tau-t_{j}}{t_j-t_{j-1}}\right)N_1(t_{j-1},S_p,I)\bigg](t_{n+1}-\tau)^{\alpha-1}d\tau
\end{align}
by evaluating the integrals and substitute $h=t_j-t_{j-1},\,\,t_j=jh$ and simplify to get have simplification obtained the following
\begin{align}
\nonumber S_p(t_{n+1})& =S_p(0)+\frac{1-\alpha}{AB(\alpha)}N_1(\tau,S_p^n,I^n)\\ &\nonumber+\frac{{\alpha}h^\alpha}{AB(\alpha)\Gamma(\alpha+2)}\bigg[\sum_{j=0}^{n}N_1(t_j,S_p,I)\big[(-j+n+1)^\alpha(-j+n+\alpha+2)\\ &\nonumber-(-j+n)^\alpha(-j+n+2\alpha+2)\big] -\big[\sum_{j=0}^{n}N_1(t_{j-1},S_p,I)\big[(-j+n+1)^{\alpha+1}\\ &-(-j+n)^\alpha(-j+n+\alpha+2)\big]\bigg]
\end{align}
by adopting the same procedure one can obtained the following
\begin{align}
\nonumber I(t_{n+1})& =I(0)+\frac{1-\alpha}{AB(\alpha)}N_2(\tau,S_p^n,I^n)\\ &\nonumber+\frac{{\alpha}h^\alpha}{AB(\alpha)\Gamma(\alpha+2)}\bigg[\sum_{j=0}^{n}N_1(t_j,S_p,I)\big[(-j+n+1)^\alpha(-j+n+\alpha+2)\\ &\nonumber-(-j+n)^\alpha(-j+n+2\alpha+2)\big] -\big[\sum_{j=0}^{n}N_2(t_{j-1},S_p,I)\big[(-j+n+1)^{\alpha+1}\\ &-(-j+n)^\alpha(-j+n+\alpha+2)\big]\bigg]
\end{align}
\begin{align}
\nonumber I(t_{n+1})& =I(0)+\frac{1-\alpha}{AB(\alpha)}N_2(\tau,S_p^n,I^n)\\ &\nonumber+\frac{{\alpha}h^\alpha}{AB(\alpha)\Gamma(\alpha+2)}\bigg[\sum_{j=0}^{n}N_2(t_j,S_p,I)\big[(-j+n+1)^\alpha(-j+n+\alpha+2)\\ &\nonumber-(-j+n)^\alpha(-j+n+2\alpha+2)\big] -\big[\sum_{j=0}^{n}N_2(t_{j-1},S_p,I)\big[(-j+n+1)^{\alpha+1}\\ &-(-j+n)^\alpha(-j+n+\alpha+2)\big]\bigg]
\end{align}
\begin{align}
\nonumber I_n(t_{n+1})& =I_n(0)+\frac{1-\alpha}{AB(\alpha)}N_4(\tau,S_p^n,I^n)\\ &\nonumber+\frac{{\alpha}h^\alpha}{AB(\alpha)\Gamma(\alpha+2)}\bigg[\sum_{j=0}^{n}N_4(t_j,S_p,I)\big[(-j+n+1)^\alpha(-j+n+\alpha+2)\\ &\nonumber-(-j+n)^\alpha(-j+n+2\alpha+2)\big] -\big[\sum_{j=0}^{n}N_4(t_{j-1},S_p,I)\big[(-j+n+1)^{\alpha+1}\\ &-(-j+n)^\alpha(-j+n+\alpha+2)\big]\bigg]
\end{align}
\begin{align}
\nonumber I_c(t_{n+1})& =I_c(0)+\frac{1-\alpha}{AB(\alpha)}N_5(\tau,S_p^n,I^n)\\ &\nonumber+\frac{{\alpha}h^\alpha}{AB(\alpha)\Gamma(\alpha+2)}\bigg[\sum_{j=0}^{n}N_5(t_j,S_p,I)\big[(-j+n+1)^\alpha(-j+n+\alpha+2)\\ &\nonumber-(-j+n)^\alpha(-j+n+2\alpha+2)\big] -\big[\sum_{j=0}^{n}N_5(t_{j-1},S_p,I)\big[(-j+n+1)^{\alpha+1}\\ &-(-j+n)^\alpha(-j+n+\alpha+2)\big]\bigg]
\end{align}
\begin{align}
\nonumber R(t_{n+1})& =R(0)+\frac{1-\alpha}{AB(\alpha)}N_6(\tau,I^n,I_p^n,I_n^n,I_c^n,R^n)\\ &\nonumber+\frac{{\alpha}h^\alpha}{AB(\alpha)\Gamma(\alpha+2)}\bigg[\sum_{j=0}^{n}N_6(t_j,I,I_p,I_n,I_c,R)\big[(-j+n+1)^\alpha(-j+n+\alpha+2)\\ &\nonumber-(-j+n)^\alpha(-j+n+2\alpha+2)\big] -\big[\sum_{j=0}^{n}N_6(t_{j-1},I,I_p,I_n,I_c,R)\big[(-j+n+1)^{\alpha+1}\\ &-(-j+n)^\alpha(-j+n+\alpha+2)\big]\bigg]
\end{align}
\begin{align}
\nonumber D(t_{n+1})& =D(0)+\frac{1-\alpha}{AB(\alpha)}N_7(\tau,S_p^n,										I^n,I_n^n,I_c^n,R^n,D^n)\\ &\nonumber+\frac{{\alpha}h^\alpha}{AB(\alpha)\Gamma(\alpha+2)}\bigg[\sum_{j=0}^{n}N_7(t_j,S_p,I,I_n,I_c,R,D)\big[(-j+n+1)^\alpha(-j+n+\alpha+2)\\ &\nonumber-(-j+n)^\alpha(-j+n+2\alpha+2)\big] -\big[\sum_{j=0}^{n}N_7(t_{j-1},S_p,I,I_n,I_c,R,D)\big[(-j+n+1)^{\alpha+1}\\ &-(-j+n)^\alpha(-j+n+\alpha+2)\big]\bigg]
\end{align}

\subsection{Numerical simulations}
We illustrate our results with some plots. In Figures 1, we can see the effect of the power law. In Figures 8-14, we can see the effect of the exponential decay kernel. In Figures 15-21, we can see the effect of the Mittag-Leffler kernel.

\begin{figure}[H]
	\centering
	\includegraphics[width=6.5cm]{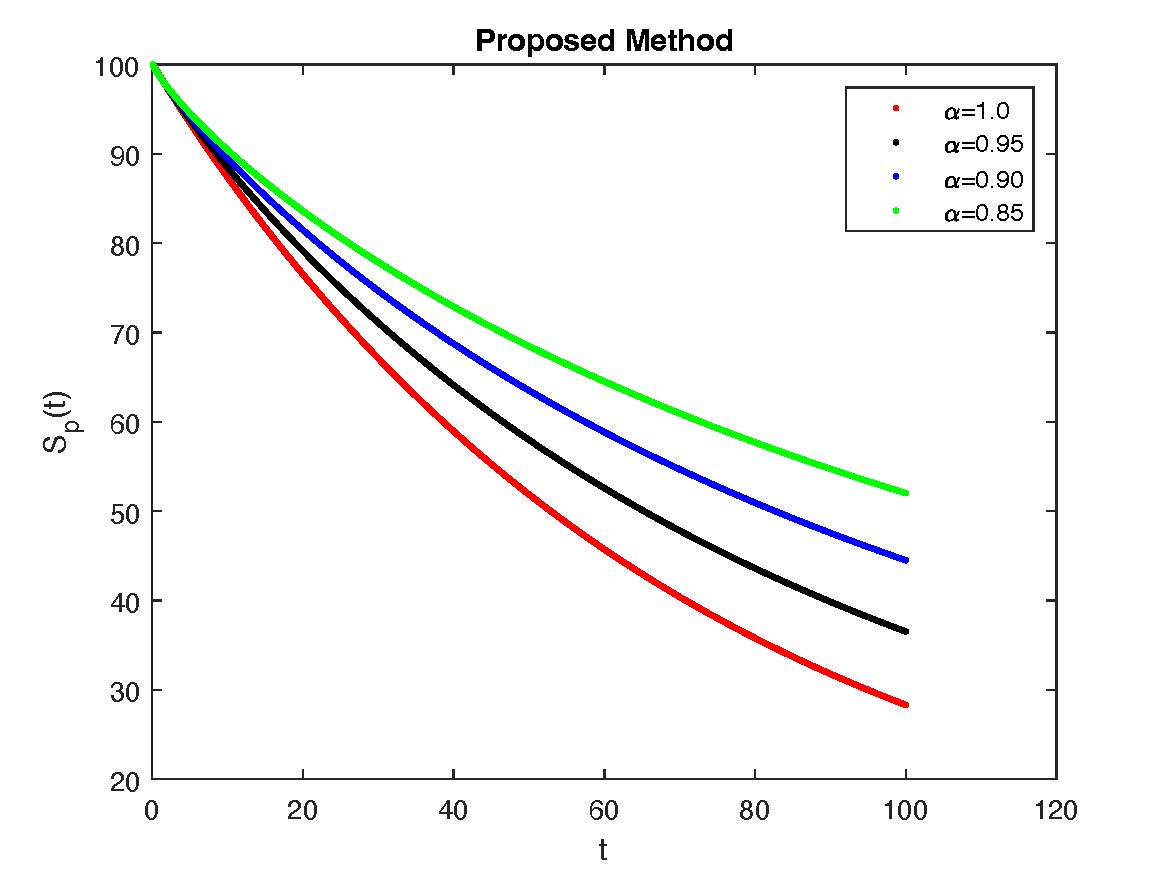}
	\includegraphics[width=6.5cm]{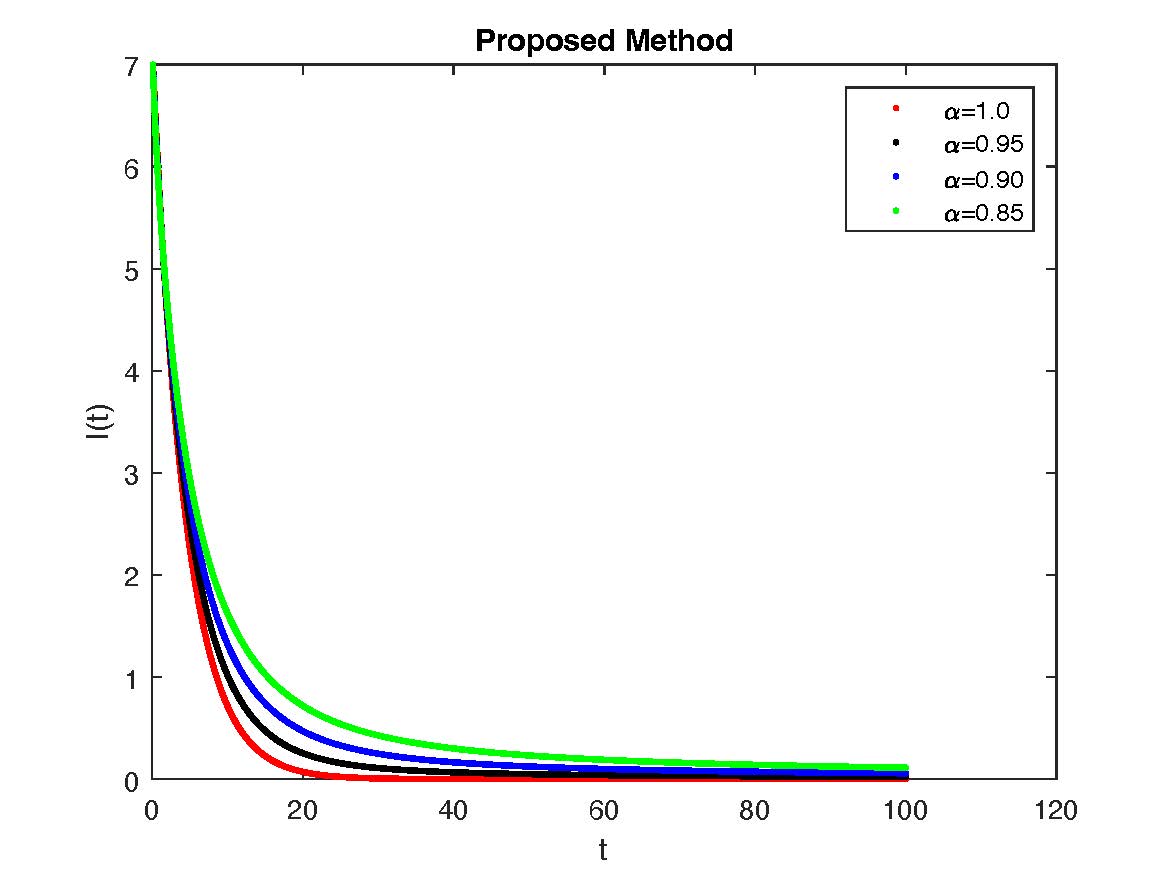}
	\includegraphics[width=6.5cm]{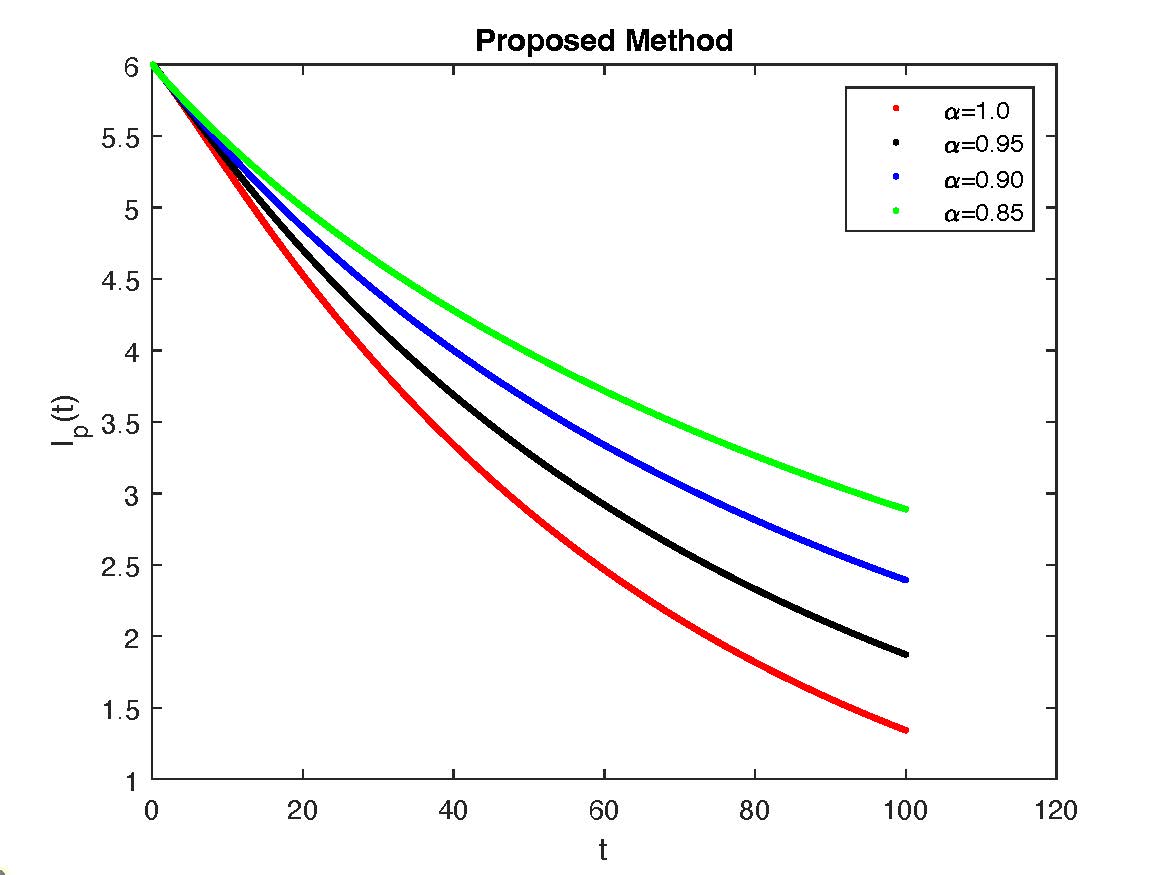}
	\includegraphics[width=6.5cm]{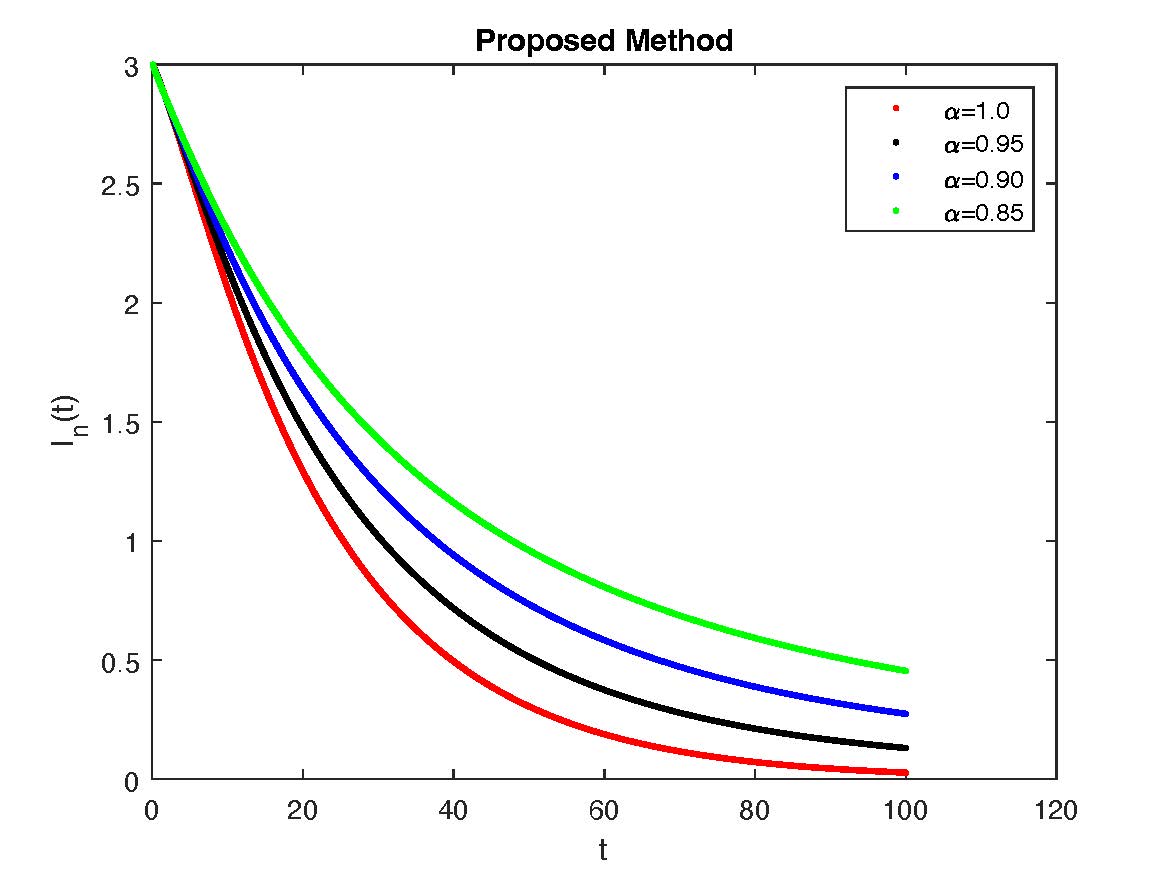}
	\includegraphics[width=6.5cm]{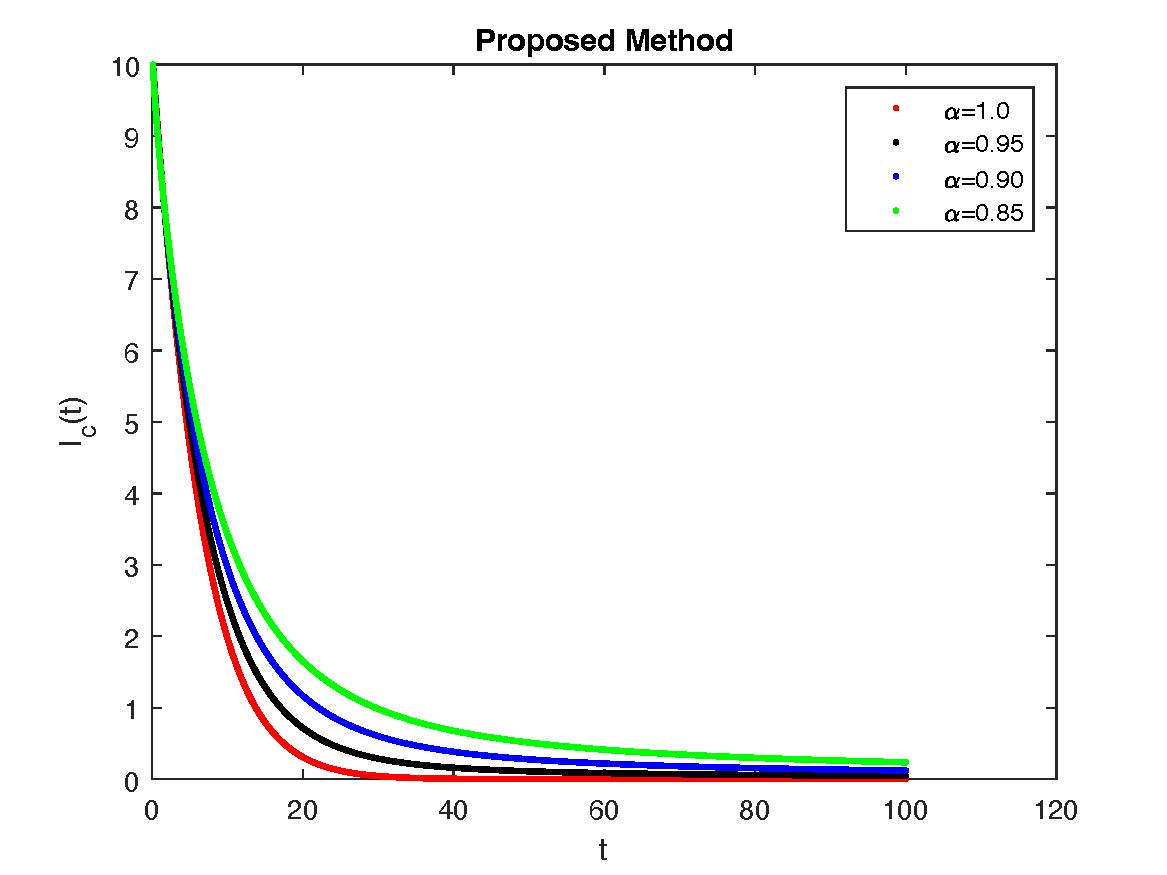}	
	\includegraphics[width=6.5cm]{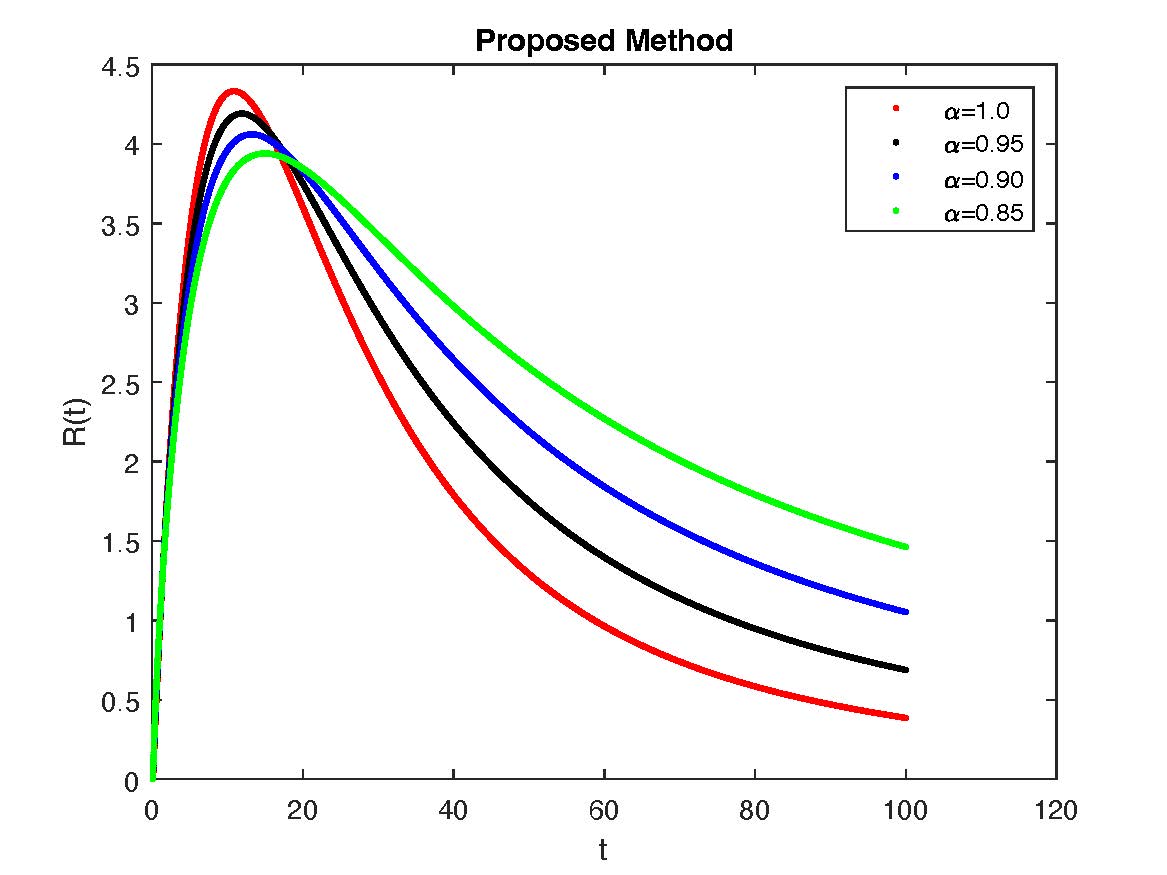}
	\includegraphics[width=6.5cm]{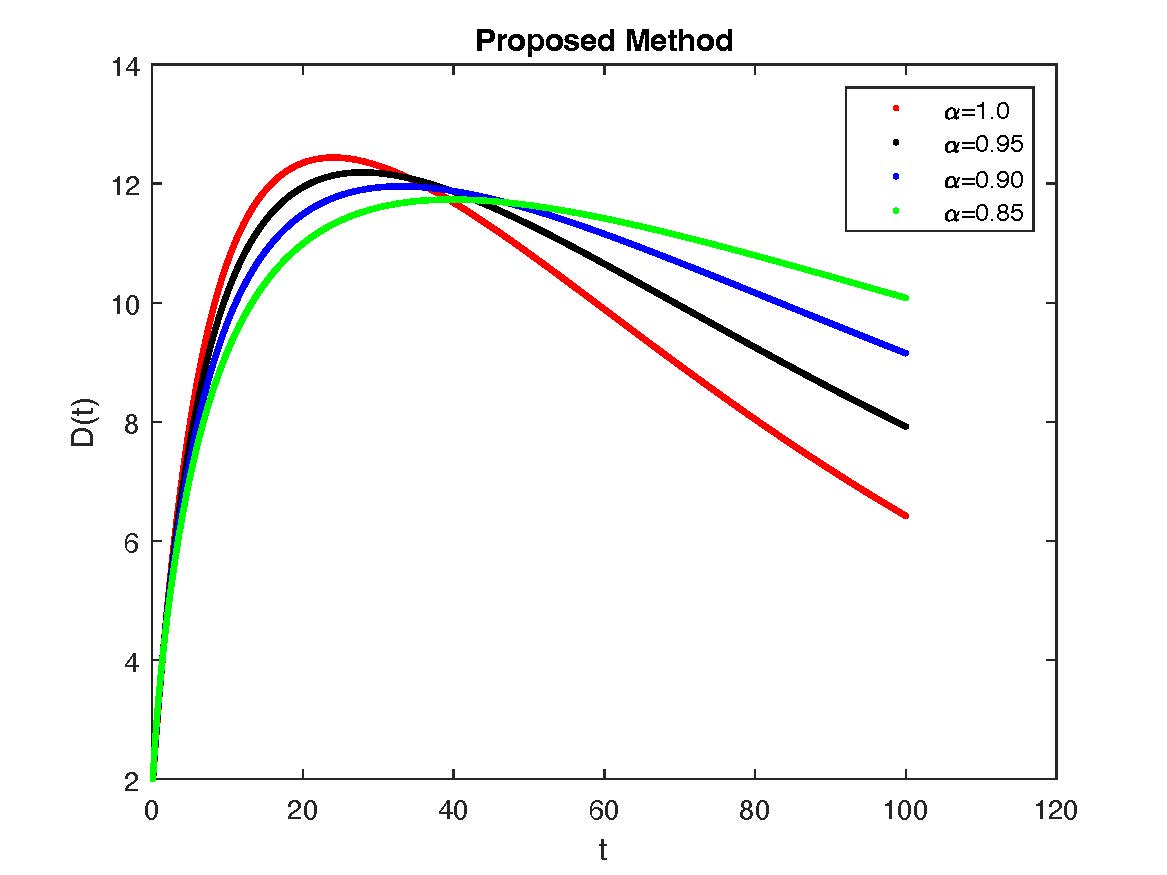}	
	\caption{Evolution of the population classes modeled with FFP: susceptible $S_p$, impacted by media $I$, positive opinion $I_p$, negative opinion $I_n$, confused $I_c$, overcome the misinformation $R$, death or denials $D$.}
\end{figure}

\begin{figure}[H]
	\centering
	\includegraphics[{width=7.5cm}]{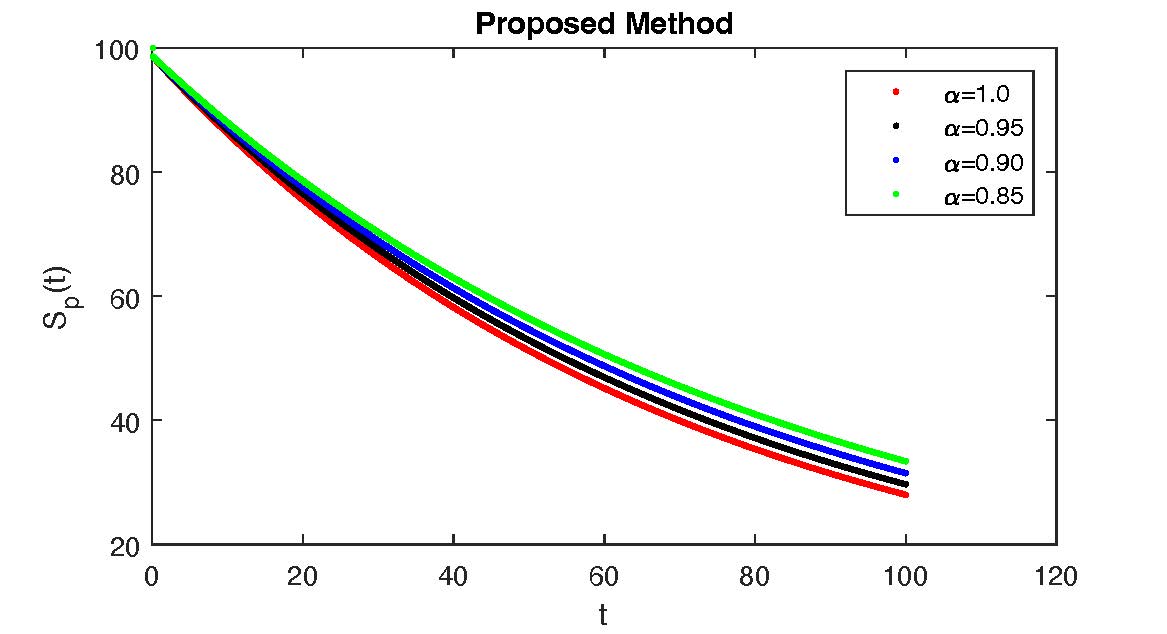}
	\includegraphics[{width=7.5cm}]{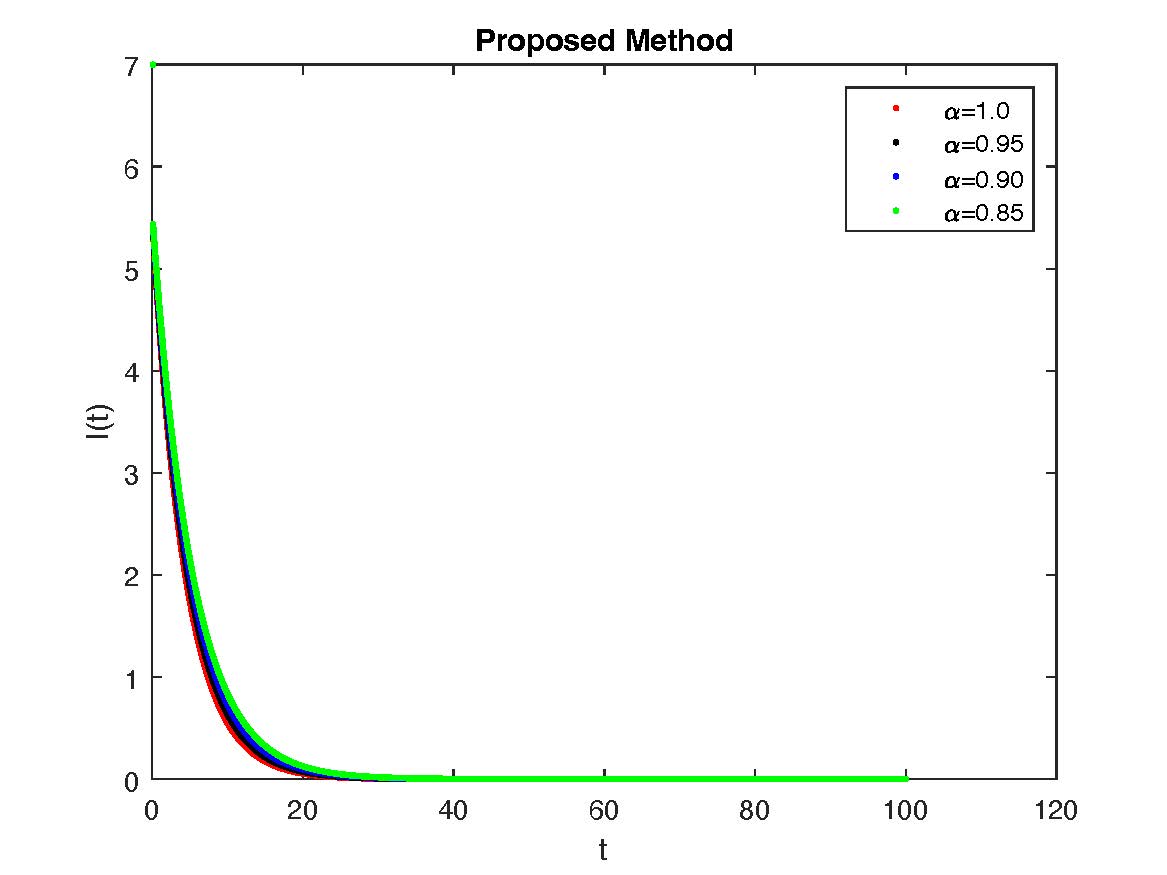}\\
	\includegraphics[{width=7.5cm}]{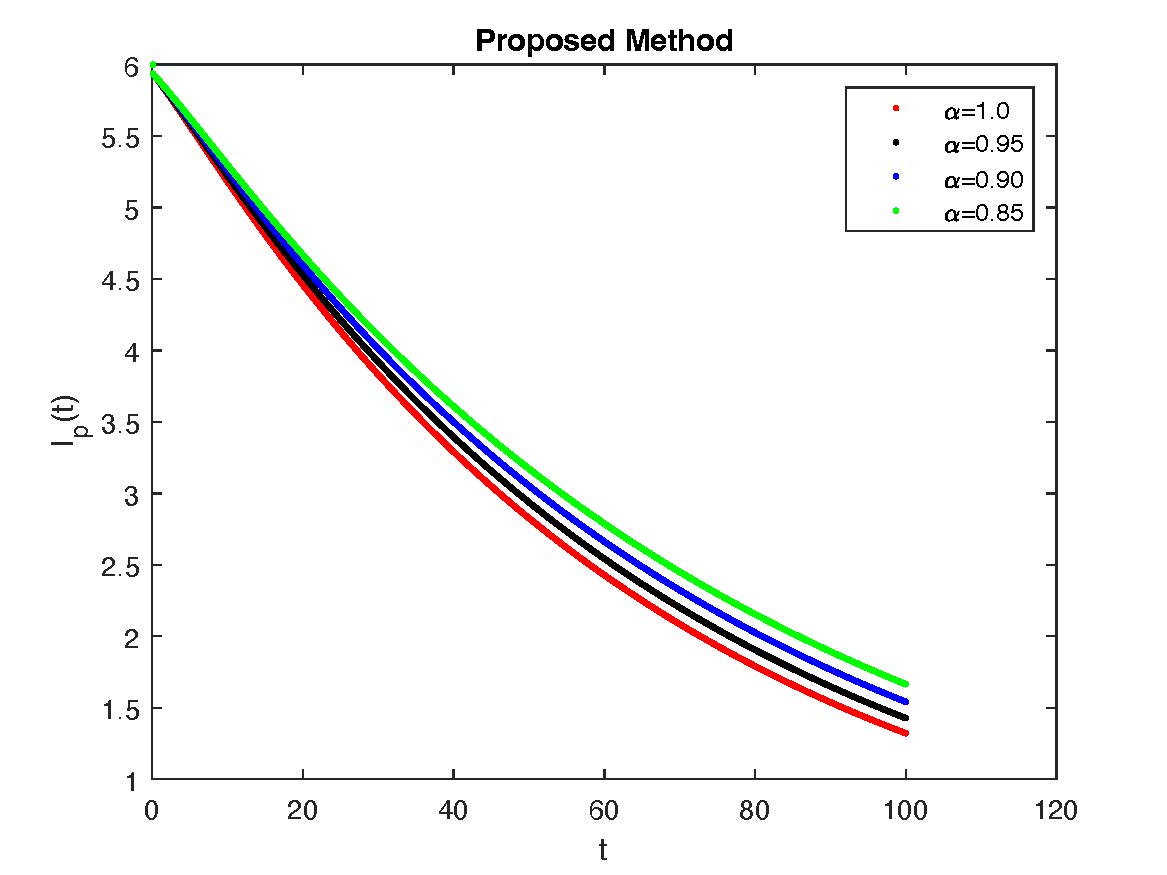}
	\includegraphics[{width=7.5cm}]{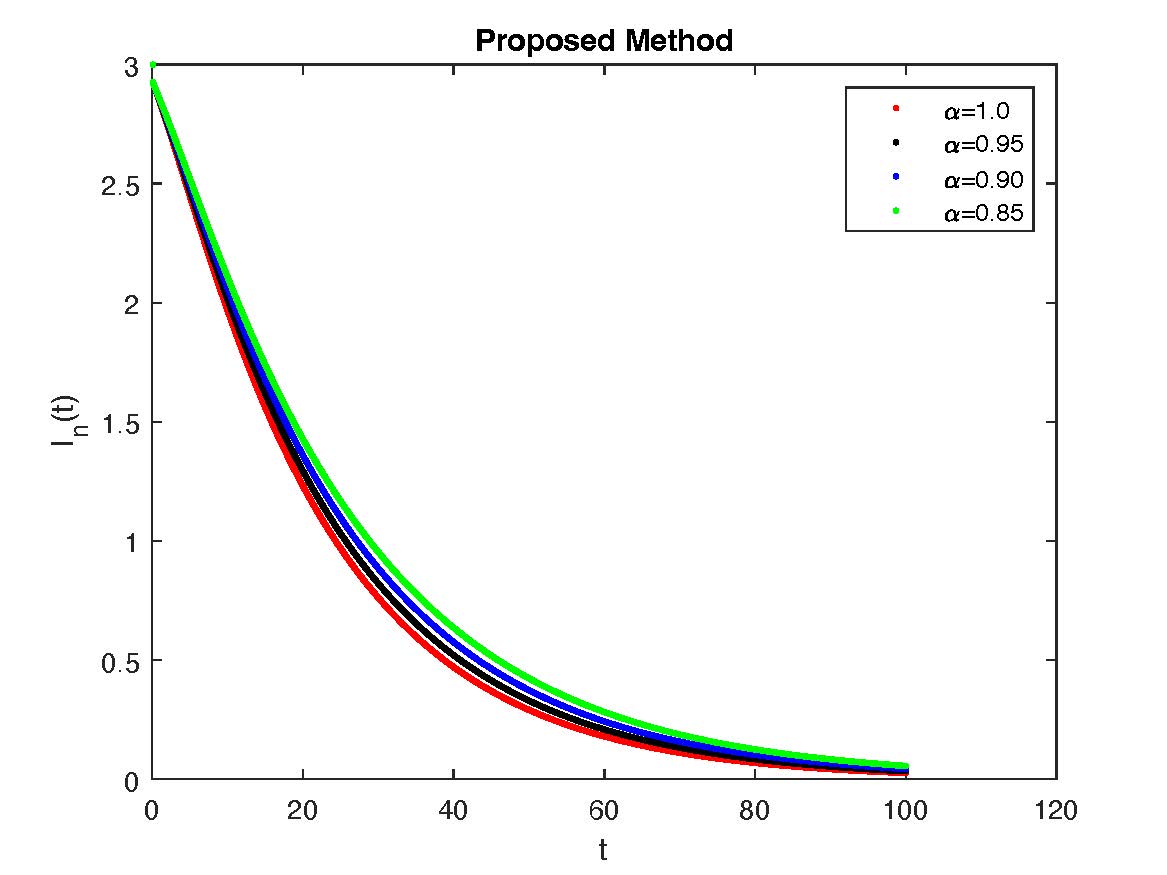}\\
	\includegraphics[{width=7.5cm}]{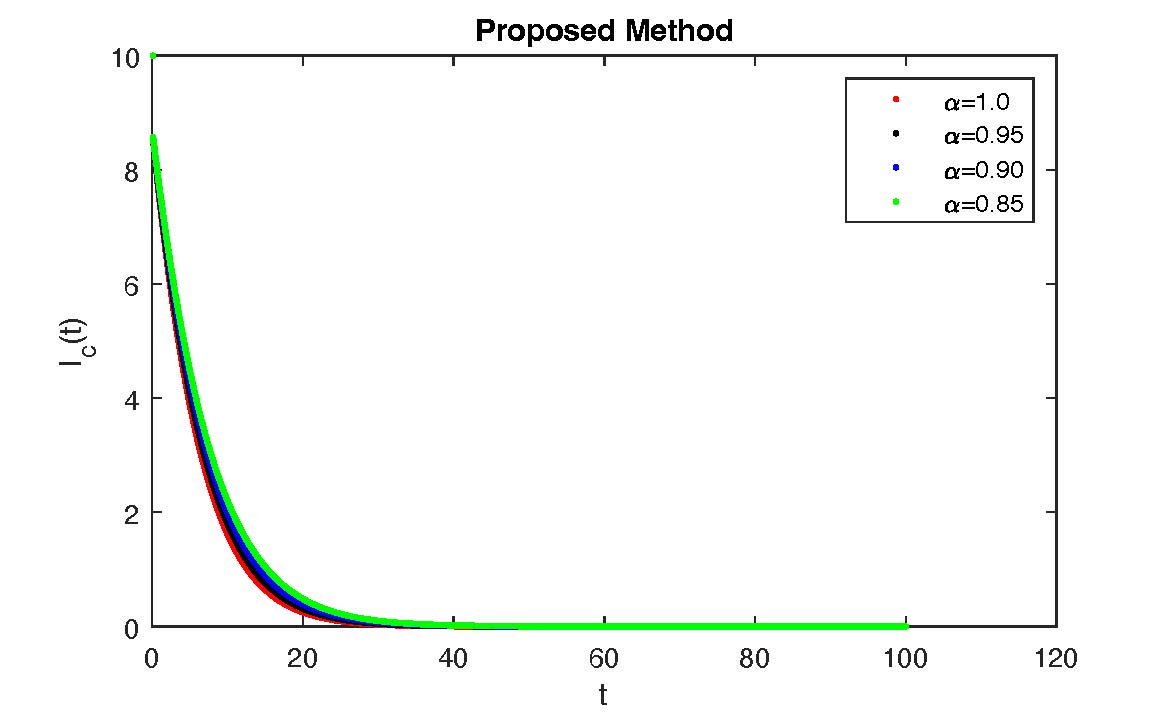}	
	\includegraphics[{width=7.5cm}]{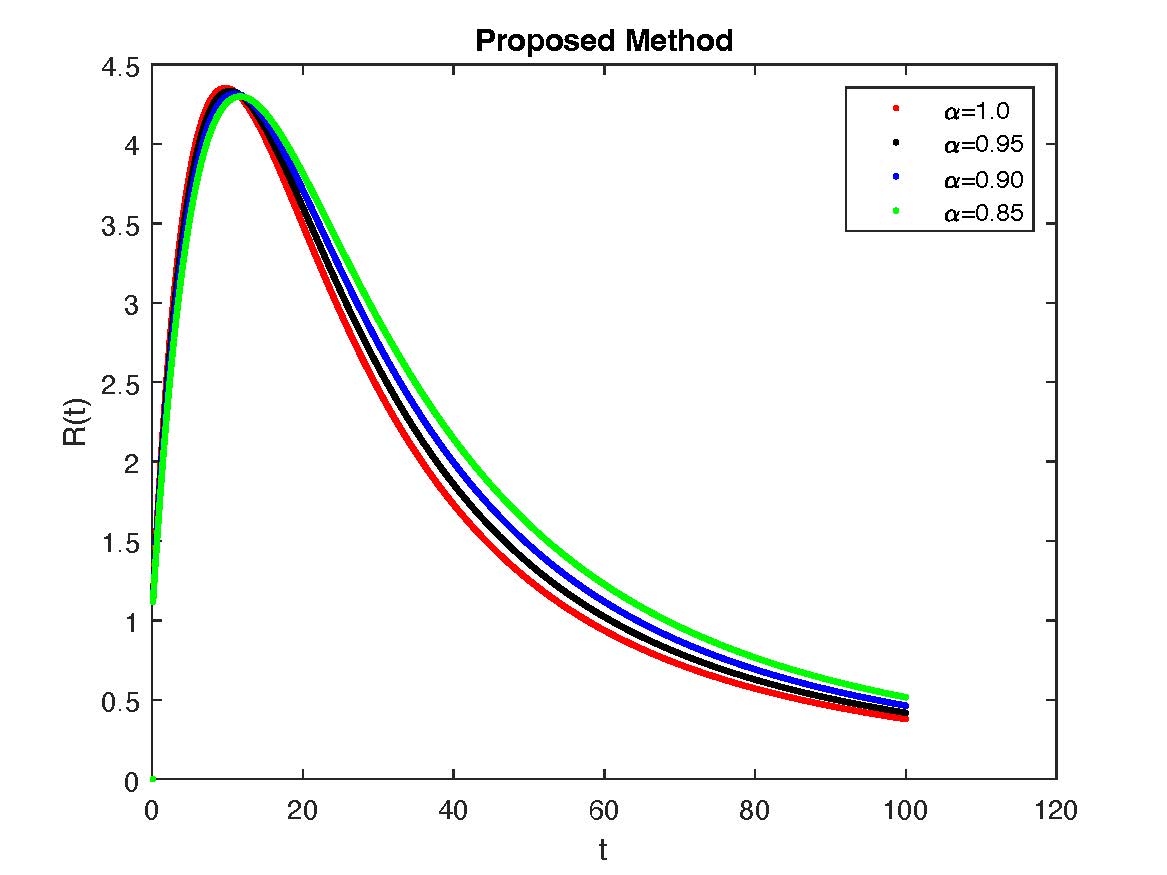}\\
	\includegraphics[{width=7.5cm}]{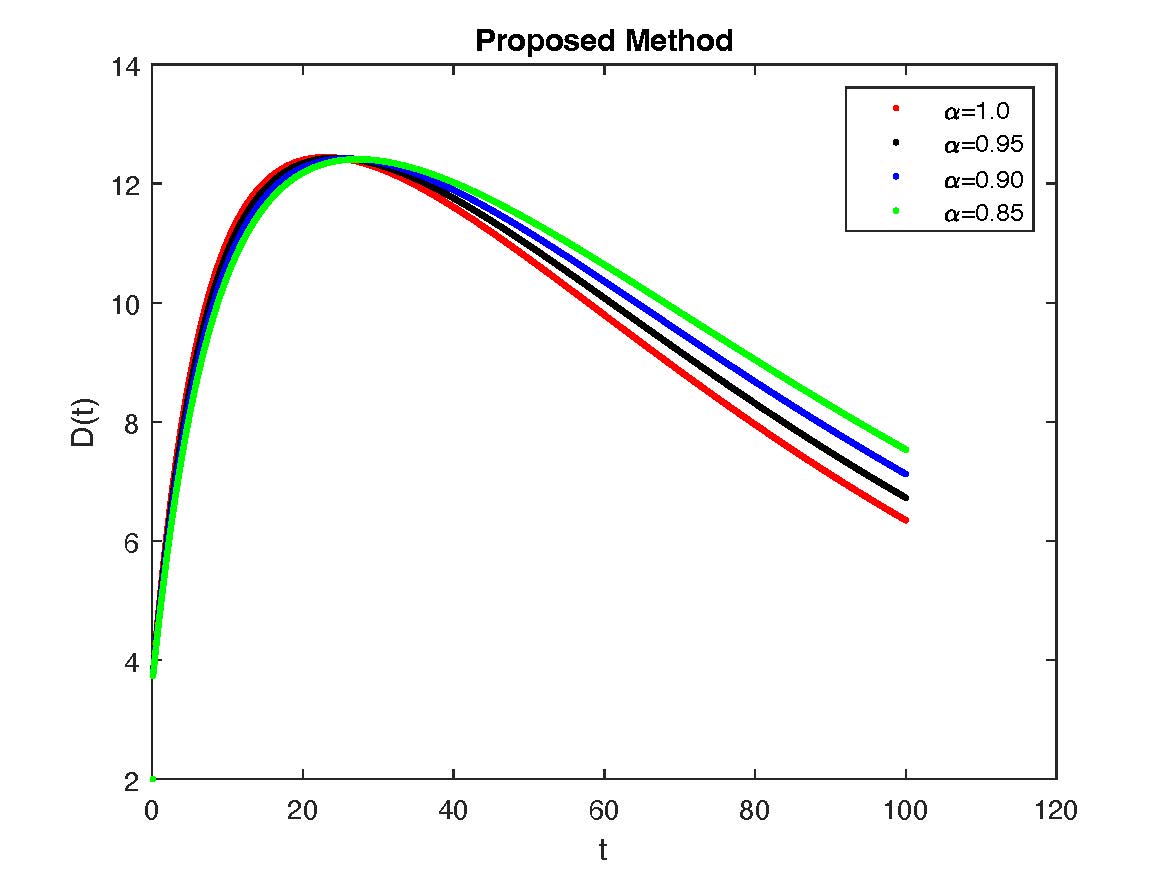}
	\caption{Evolution of the population classes modeled with FFE: susceptible $S_p$, impacted by media $I$, positive opinion $I_p$, negative opinion $I_n$, confused $I_c$, overcome the misinformation $R$, death or denials $D$.}
\end{figure}

\begin{figure}[H]
	\centering
	\includegraphics[{width=7.5cm}]{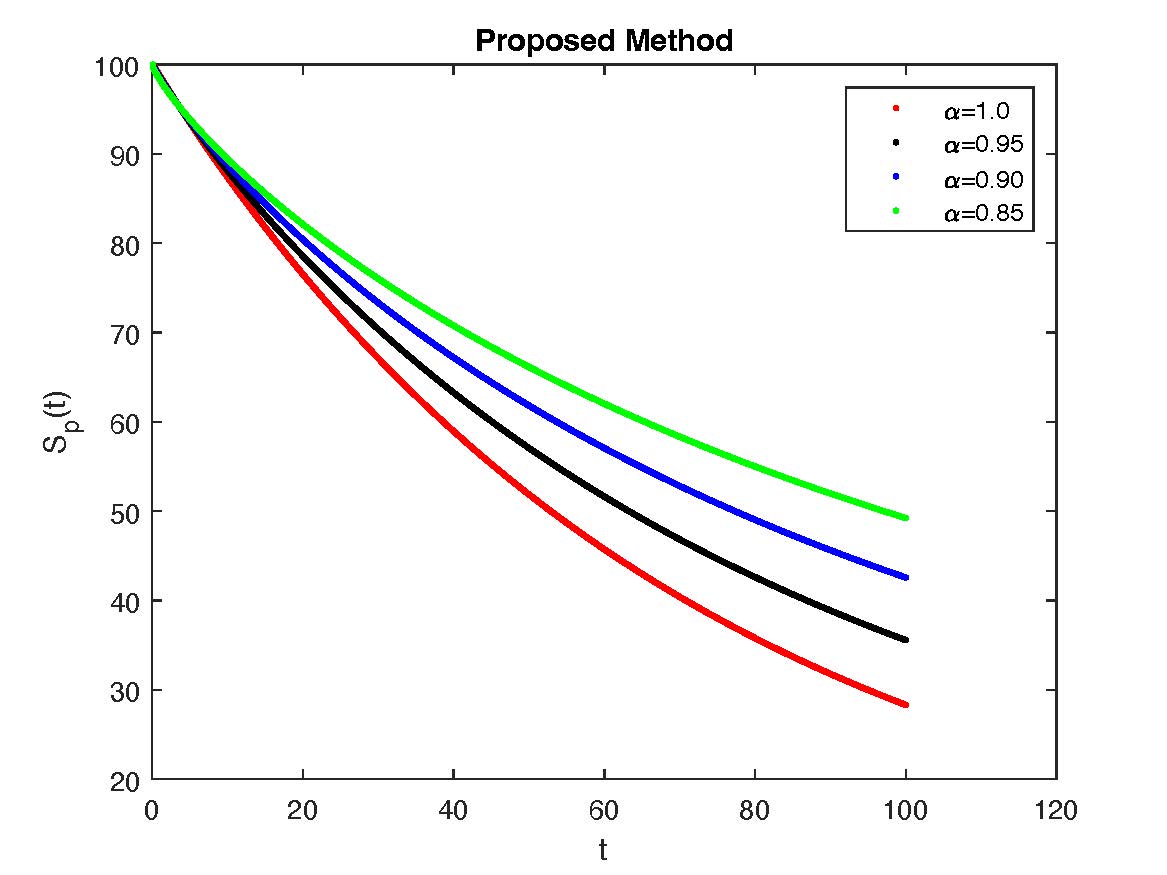}
	\includegraphics[{width=7.5cm}]{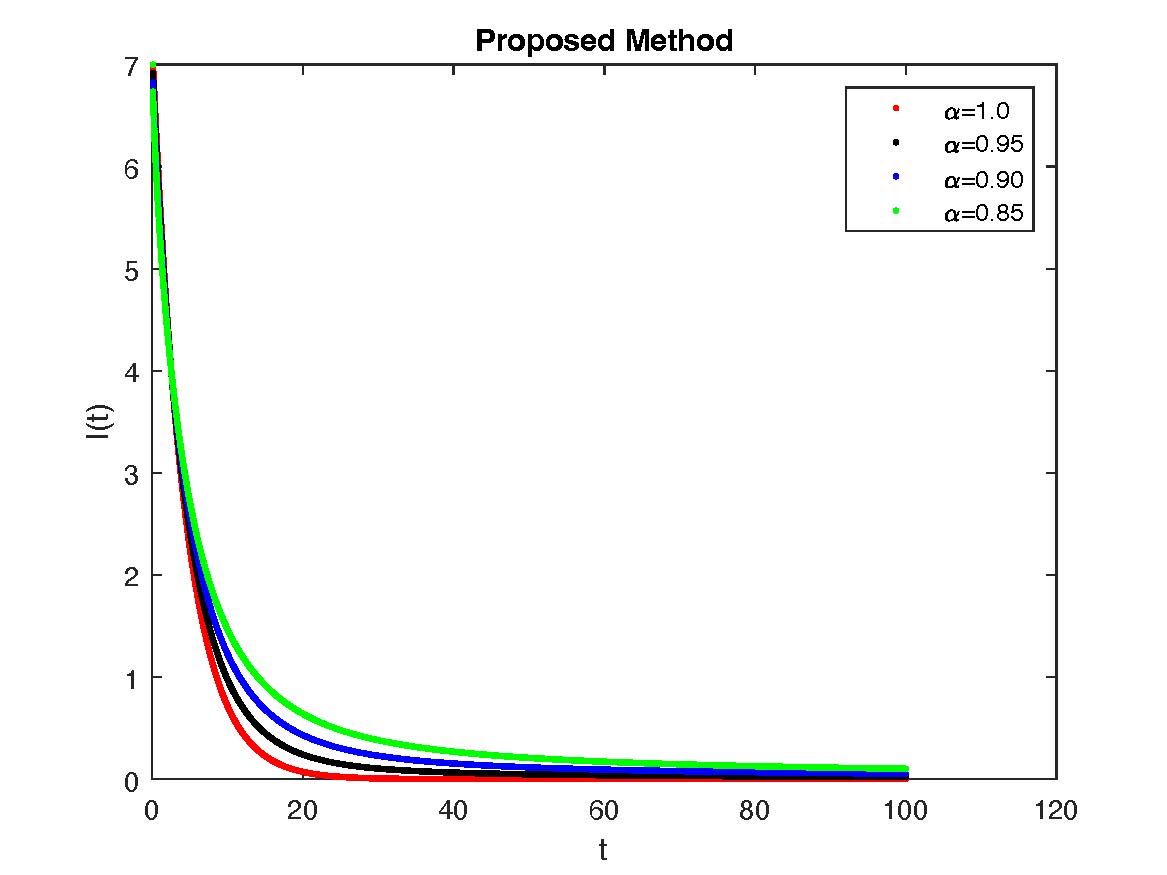}\\
	\includegraphics[{width=7.5cm}]{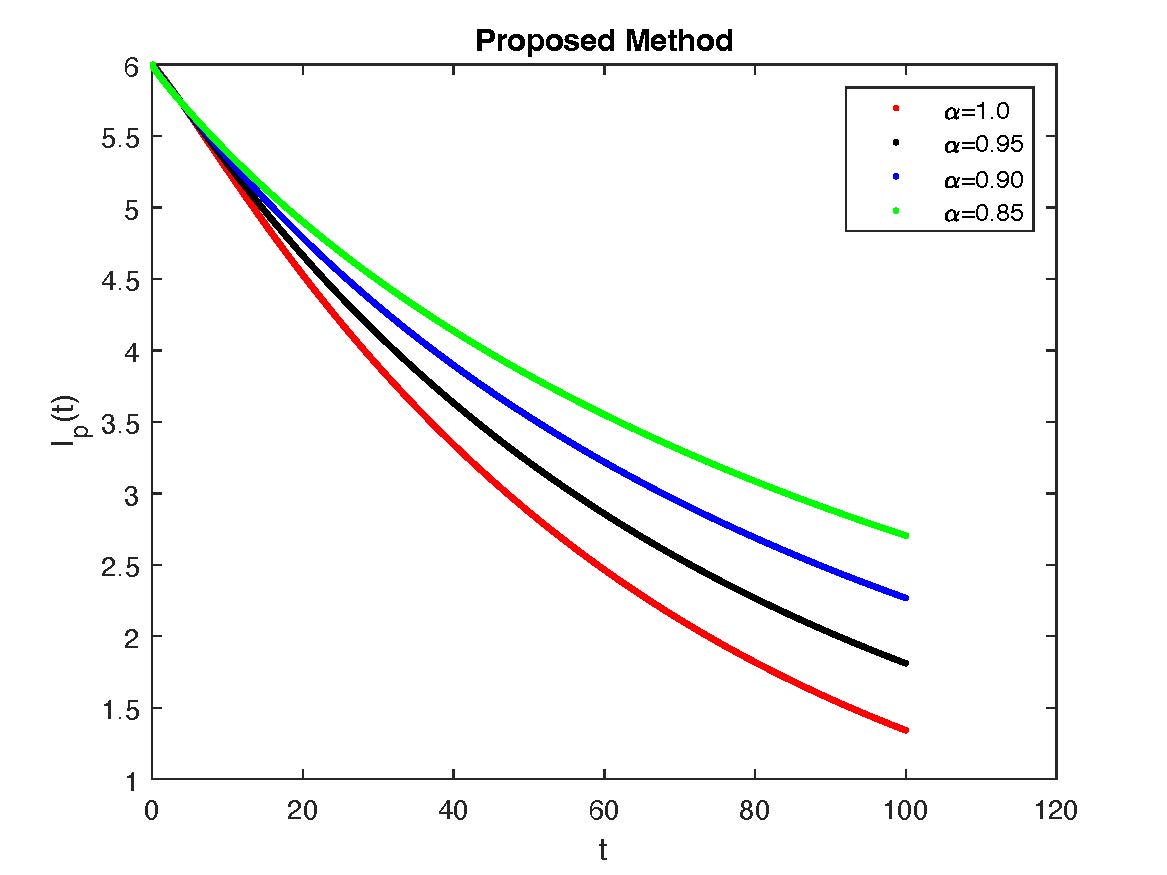}
	\includegraphics[{width=7.5cm}]{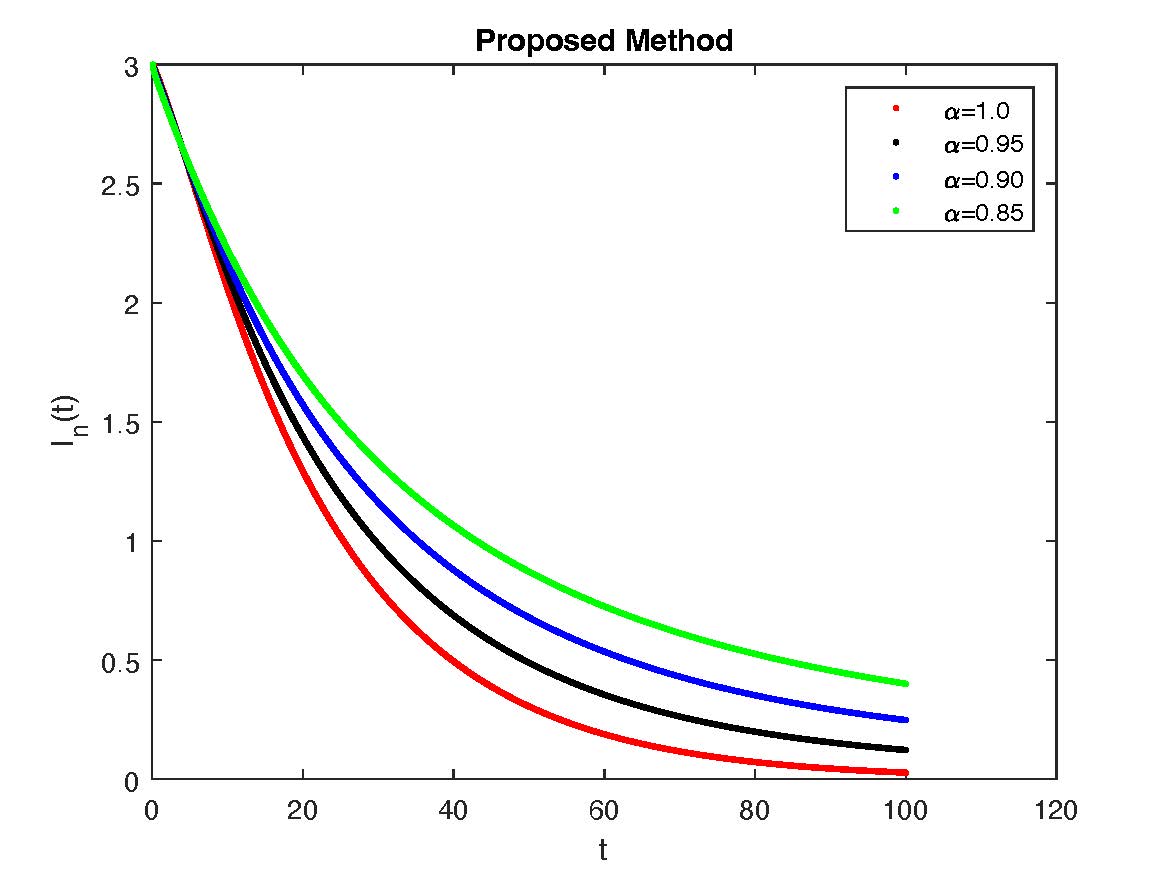}\\
	\includegraphics[{width=7.5cm}]{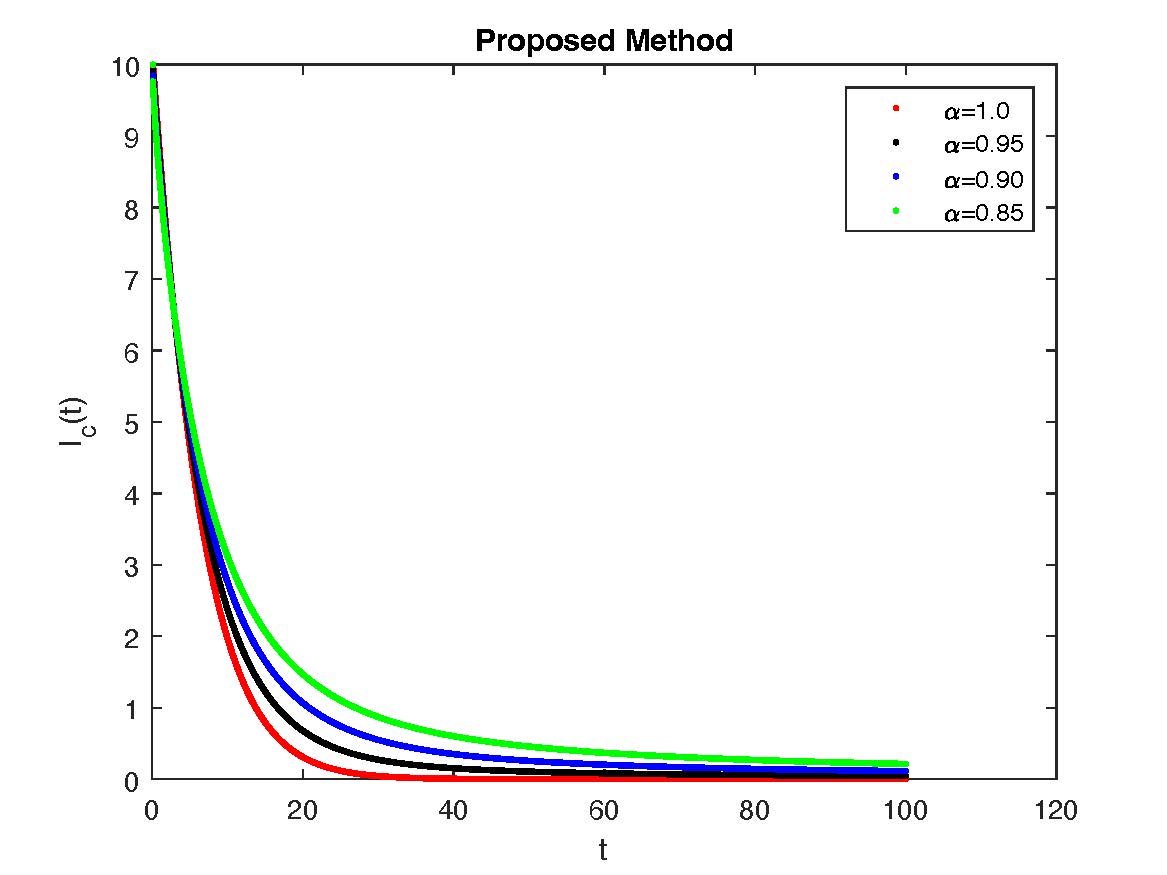}	
	\includegraphics[{width=7.5cm}]{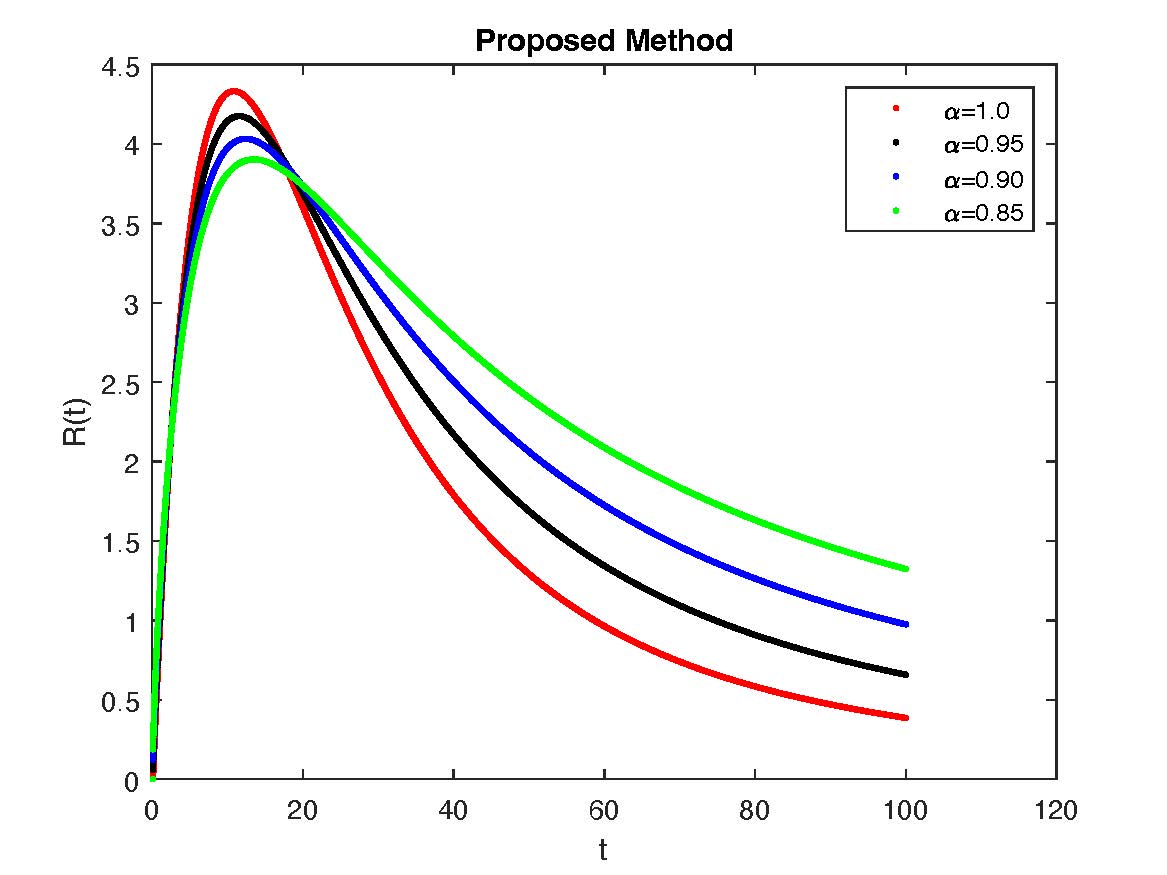}\\
	\includegraphics[{width=7.5cm}]{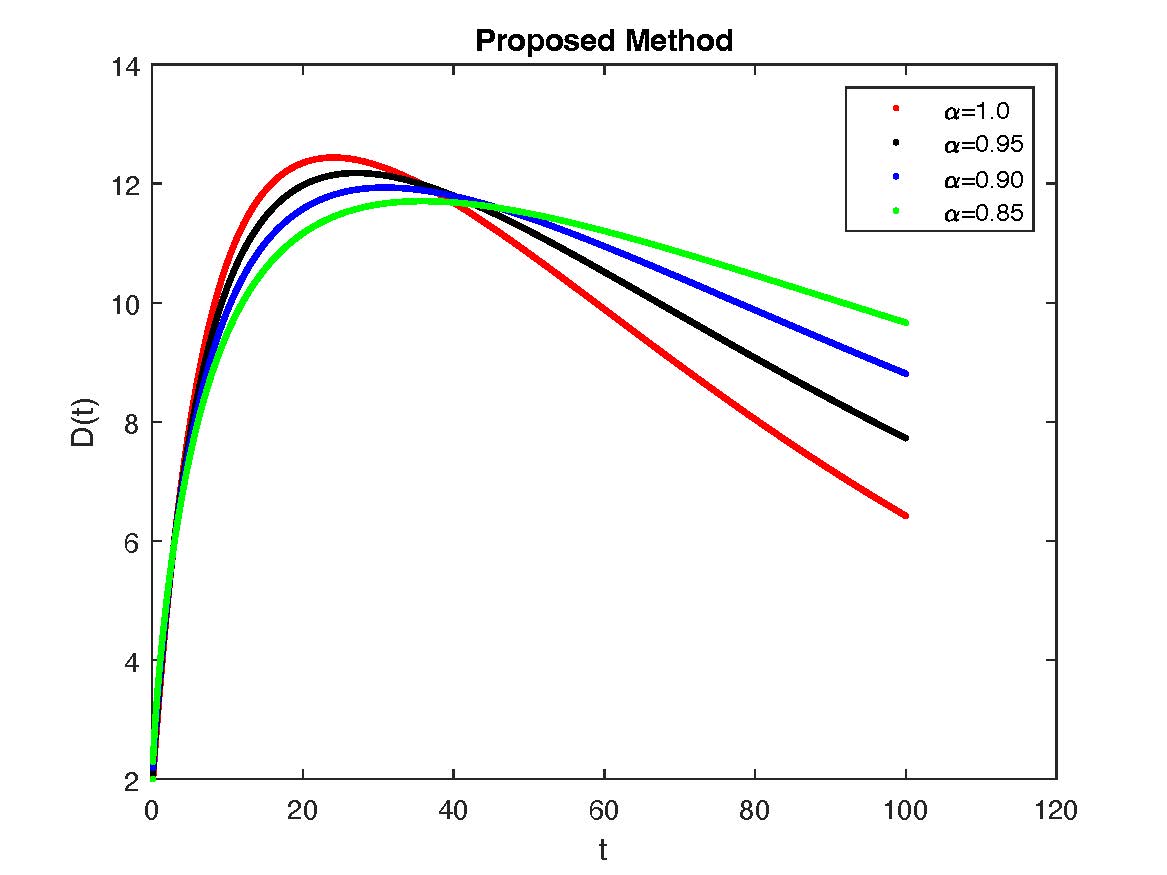}
	\caption{Evolution of the population classes modeled with FFM: susceptible $S_p$, impacted by media $I$, positive opinion $I_p$, negative opinion $I_n$, confused $I_c$, overcome the misinformation $R$, death or denials $D$.}
\end{figure}

\section{Conclusion}
\label{sec:conclusions}
In this paper, we have developed a mathematical model to depict the spreading and persistence of harmful information, inspired in epidemiological SIR models. Our model comprises seven ordinary differential equations, introducing a novel analysis that incorporates a strength number capturing the combined impact of both nonlinear and linear components within the epidemiological model. Moreover, we introduced the second derivative of the Lyapunov function to characterize the eventual detection of waves. To illustrate our findings, we conducted numerical simulations using three distinct kernels for the fractal fractional derivative.

\begin{flushleft}
	\textbf{Statement about data}
\end{flushleft}
No data was used in this work.
\begin{flushleft}
	\textbf{Conflict of Interests}
\end{flushleft}
The authors have no conflict of interests.

\begin{flushleft}
	\textbf{Acknowledgements}
\end{flushleft}
The third authos is supported by the R+D project ANDHI project CPP2021-008994 funded by MCIN/ AEI/10.13039/501100011033/ through European Union - NextGenerationEU/PRTR.

\bibliographystyle{alpha}
\bibliography{biblio}

\end{document}